\documentclass[11pt, reqno]{amsart}
\usepackage{fullpage}
\usepackage{amssymb,enumerate}
\usepackage[colorlinks=true,linkcolor=blue]{hyperref}
\theoremstyle{plain}

\usepackage{minted}
\usepackage{graphicx,comment}

\hypersetup{
    colorlinks = true,
    citecolor= {teal},
    linkcolor = {teal},
    urlcolor = {teal}
}

\DeclareMathOperator{\diag}{diag}

\DeclareMathOperator{\ran}{ran}

\newtheorem{theorem}{Theorem}[section]

\newtheorem{corollary}[theorem]{Corollary}
\newtheorem{proposition}[theorem]{Proposition}
\theoremstyle{remark}

\newtheorem{remark}[theorem]{Remark}

\numberwithin{equation}{section}

\newcommand{\Comp}{\mathbb C}
\newcommand{\norm}[1]{\left\| #1\right\|}

\title[Index Detection]{A Perturbation Method for Index Detection for Linear Matrix Pencils }

\author{Hanna Blazhko}
\author{Micha\l{} Wojtylak}
\begin{document}
\begin{abstract}
Rigorous, non-asymptotic bounds for the Puiseux expansion of the eigenvalue at infinity are given.
Error analysis is provided.
Further, the expected value of the eigenvector condition number of a randomly perturbed matrix is estimated. The latter result is applied to the Cayley  transform of the linear pencil.  Numerical simulations illustrating the theoretical findings are provided. 
\end{abstract}

\maketitle

%%%%%%%%%%%%%%%%%%%%%%%%%%%%%%%%%%%%%%%%%%%%%%%%%%%%%%%%%%%%%%%%%%%%
\section{Introduction}

In recent years, significant progress has been made in randomized methods for computing eigenvalues of linear pencils, in particular for singular pencils \cite{Hochstenbach_2019,Hochstenbach_2023,Kressner_2024} and singular quadratic polynomials \cite{kressner2023singular}, thereby overcoming the classical limitation \cite{Wilkinson1979}. These methods—based on random perturbations of a prescribed rank, random projections, and random augmentations—often demonstrate performance superior to that of the classical staircase algorithm \cite{VanDooren1979}. While the new approach successfully determines the eigenvalues, it does not address the computation of the Kronecker form. The aim of the present work is to make progress in this direction. For simplicity, we restrict our attention to regular pencils, and on the \emph{index}, i.e., the size of the largest Kronecker block associated with the eigenvalue at infinity. We also briefly indicate how the results on index can be transferred to finite eigenvalues.
We refer to  \cite{emmrich2013operator,saak2018model,mehrmann2023control,freund2004extension} for  applications of index $2$ systems.
While our work is pure finite-dimensional, it is also related  to a recent infinite-dimensional study on index  concepts, studied in detail in
\cite{erbay2024index}. 

We approach the problem using the first-order perturbation theory developed  in \cite{Lidski66} and  \cite{de2008first}. The latter paper  states that 
for a pencil $\lambda E - A$ with an eigenvalue infinity with the largest  Kronecker block of size $k$, the eigenvalues of the perturbed pencil
\[
\lambda E - A + \tau (\lambda X - Y),
\]
where $\lambda X - Y \in \mathbb{C}^{n\times n}[\lambda]$ is a generic perturbation, admit, for sufficiently small $\tau$, a Puiseux expansion 
\begin{align}\label{eq:expansion}
\lambda(\tau) =  C \tau^{-1/k} + o(\tau^{-1/k})
\end{align}
with some constant $C\neq0$.
Our idea is to use this expansion to determine the index. However, recall that the numerical algorithms tend to 
replace the Kronecker blocks of size $k$ by $k$ single eigenvalues in the neighborhood of $\lambda_0$, see, e.g., \cite{ahmad2010pseudospectra,alam2005sensitivity,stewart1972sensitivity,sun1991perturbation}.
Hence, the first  question that needs to be posed for applying the method follows:\\

(Q1) \emph{ Can the fractional expansion \eqref{eq:expansion} be observed in numerical simulations? }\\

Rephrasing the question, we are interested in whether 
 a small perturbation $\Delta(\lambda)$ of $\lambda E-A$ leads to a corresponding perturbation of the behavior described by \eqref{eq:expansion}.  
We give a positive answer to this question, providing (in case $k=2$) a rigorous non-asymptotic estimate 
$C_1 \tau^{-1/2}  \leq |\lambda_\Delta(\tau)|\leq C_2\tau^{-1/2}$ of the eigenvalue $\lambda_\Delta(\tau)$ of the perturbed pencil.    
The next question we address is\\

(Q2) \emph{ Is the same behavior observed in  the discretization?}\\

For that, we consider the simplest discretization model, given by the Cayley transform of the pencil,
$$
\left( E - hA \right)^{-1}\left( E + hA \right).
$$
In our approach, we perturb this matrix with a random matrix and average the results over several samples (see below for details). Our theoretical results here are based on a recent technique \cite{banks2021gaussian} that allows controlling the eigenvector condition number of the perturbed matrix.
This allows us to give a positive  answer to the second question.
  This leads to the third, central objective:\\

(Q3) \emph{ Can the observed behavior of the perturbed eigenvalues be used to determine the index?}\\

Our theoretical and numerical results confirm that it is possible to determine whether a given pencil lies in proximity to the set of index-two pencils.
In this sense, our work can be seen as solving a distance problem, see \eqref{deltaE} for details on the norm used. 
We refer here to  \cite{akinola2014calculation,alam2005construction,alam2011characterization,giesbrecht2017computing,kressner2015distance}, and  \cite{gillis2018computing,MehMW21}  for related distance problems for matrices and  matrix pencils, respectively.

Although the theorems are formulated for general matrices, they are particularly suited for 
pencils connected with energy-based modeling. We refer the reader to \cite{mehrmann2023control} for an overview and to \cite{gillis2018computing,gillis2018finding,gillis2022solving} for related distance problems. Several applications appear  in Section~\ref{sec:examples}. 
In particular, we discuss dissipative Hamiltonian pencils, i.e., pencils of the form $\lambda E - (J - R)Q$, where all matrices are complex square ones, satisfying additionally $E^*Q \geq 0$, $R \geq 0$, and $J = -J^*$.
The Kronecker form of these pencils is described in \cite{MehMW18,MehMW21} and \cite{faulwasser2022optimal}. In Section~\ref{sec:pH}, we contribute to the topic by discussing the case $Q>0$. The remarkable property of such pencils is that all their eigenvalues lie in the closed left half-plane, and the eigenvalues on the imaginary axis are always semisimple, except possibly for a Kronecker block of size two at zero or at infinity. Notably, $R>0$ implies index one, see \cite{gillis2018computing}. On the other hand,  Kronecker blocks of size one violate minimality of the system, see \cite{freund2004extension}, which motivates the main assumption in Theorem~\ref{Theorem1} below and in subsequent corollaries.  An additional motivation for (Q2) is  that in this setting the Cayley transform is the basis for the implicit midpoint rule, one of the simplest discretization rules preserving the energy, cf.  \cite{kotyczka2021symplectic}
 for implementation  for port-Hamiltonian system.  

We discuss now the main results of the paper. We consider complex matrices with the euclidean norm. A central role in both the deterministic and probabilistic analyses is played by the \emph{eigenvector condition number} defined as 
\begin{equation*}
    \kappa_V(X):= \inf\left\{\kappa(T): T\in\Comp^{n,n},\ T \text{ invertible},\ T^{-1}XT  \text{ diagonal } \right\},
\end{equation*} 
where $\kappa(T):= \norm{T}\cdot \norm{T^{-1}}$ is the \emph{condition number} of a matrix.

%%%%%%%%%%%%%%%%%%%%%%%%%%%%%%%%%%%%%%%%%%%%%%
\subsection{Deterministic results} 
The symbol $\sigma_{\min}(E)$  denotes the smallest singular value of $E$ (possibly zero).
Throughout, we  also use
\begin{equation}\label{projections}
E_{11}:=P_{\ran E} E P_{\ran E},
\quad
A_{11}:=P_{\ran E}A P_{\ran E},
\quad
A_{12}:=P_{\ran E}A P_{\ker E},
\quad
A_{21}:=P_{\ker E}A P_{\ran E},
\end{equation}
where $P_{\mathcal X}\in\Comp^{n,n}$ stands for the orthogonal projection onto a subspace $\mathcal X$ of $\Comp^n$.

Furthermore, we define
\begin{equation}\label{eq:mu_bound} \tau_0:= \sup \left\{\tau\geq0 :|\mu(\tau)| > \frac{\norm{ A_{11}}}{\tau+\sigma_{\min}(E_{11})/2}\text{ for some ev. }\mu(\tau)\text{ of }\lambda(E+\tau I)-A \right\}. \end{equation}
Note that $\tau_0>0$, provided that infinity is an eigenvalue of $\lambda E-A$, and $\tau_0=\infty$ if, additionally, $A_{11}=0$.

\begin{theorem}\label{Theorem1} Let $\lambda E -A\in\Comp^{n,n}[\lambda]$ be a regular pencil  of index two with no Kronecker blocks corresponding to infinity of size one and with $E\geq 0$. 
Then a perturbed pencil 
\begin{equation}\label{deltaE}
    \lambda (E + \tau I  + \Delta_E) -  (A +\Delta_A), 
    \quad \norm{\Delta_E}, \norm{\Delta_A}\leq \delta<\sigma_{\min}(E_{11}),
\end{equation}
 is regular and  has an eigenvalue $\mu_\Delta(\tau)$ satisfying
\begin{equation}\label{finalsquare2}
\frac1{\sqrt\tau}\sqrt{\frac{\sigma_{\min} (A_{12}A_{21}) }{\frac32 {\norm{E_{11}}+2\tau } }} - \gamma(\delta,\tau)
\leq
|\mu_\Delta(\tau)|
\leq
 \frac1{\sqrt\tau}\sqrt{\frac{\norm{A_{12}A_{21}} } {\frac 12 \sigma_{\min} ( E_{11})} } +\gamma(\delta,\tau), 
\end{equation}
where
\begin{equation}\label{gamma}
 \gamma(\delta,\tau)= \delta\cdot \kappa_V( (E +\tau I)^{-1}A  )  \frac{\tau+\norm A}{\tau(\tau-\delta)},\quad 0\leq\delta<\tau \leq \tau_0.
\end{equation}

\end{theorem}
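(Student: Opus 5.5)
Our plan is to split the argument into a deterministic unperturbed part (the case $\Delta_E=\Delta_A=0$) and a Bauer--Fike type perturbation part that produces the error term $\gamma(\delta,\tau)$.

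\textbf{Unperturbed part.} For $\tau>0$ the matrix $E+\tau I$ is positive definite, so the eigenvalues of $\lambda(E+\tau I)-A$ are the eigenvalues of $M_\tau:=(E+\tau I)^{-1}A$. Since $E\geq 0$ is Hermitian, $\ran E\perp\ker E$. In an orthonormal basis adapted to $\Comp^n=\ran E\oplus\ker E$ we have $E=\diag(E_{11},0)$ with $E_{11}>0$ on $\ran E$, and $A$ splits into blocks $A_{11},A_{12},A_{21},A_{22}$. The assumption that the index is two with no infinite blocks of size one should force $A_{22}=0$. The argument is as follows: the eigenvectors at infinity span $\ker E$, and every Jordan chain at infinity has length exactly two. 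The chain conditions $Ex_2=Ax_1$ for all $x_1\in\ker E$ then give $A\ker E\subseteq\ran E$, i.e.\ $P_{\ker E}AP_{\ker E}=0$. Regularity should also give $A_{12}A_{21}$ invertible on $\ker E$ (it is the Schur-complement obstruction), so $\sigma_{\min}(A_{12}A_{21})>0$.

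Next we write the eigenvalue equation for $\lambda(E+\tau I)-A$ with $x=(x_1,x_2)$:
\[
\mu(E_{11}+\tau)x_1=A_{11}x_1+A_{12}x_2,\qquad \mu\tau x_2=A_{21}x_1 .
\]
Eliminating $x_1=A_{21}$-preimage is awkward, so instead we substitute $x_2=A_{21}x_1/(\mu\tau)$ into the first equation. This gives the nonlinear eigenproblem
\[
\mu^2\tau(E_{11}+\tau)x_1-\mu\tau A_{11}x_1-A_{12}A_{21}x_1=0 .
\]
The definition of $\tau_0$ supplies, for $\tau\le\tau_0$, an eigenvalue $\mu$ with $|\mu|>\norm{A_{11}}/(\tau+\sigma_{\min}(E_{11})/2)$. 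For such a $\mu$ the term $\mu\tau A_{11}$ is dominated by $\mu^2\tau(E_{11}+\tau)$, up to a factor of about $1/2$ absorbed into $\tfrac12\sigma_{\min}(E_{11})$ and $\tfrac32\norm{E_{11}}$.

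Taking norms in the identity $\mu^2\tau(E_{11}+\tau-A_{11}/\mu)x_1=A_{12}A_{21}x_1$ then yields the two-sided bound. For the lower bound we estimate $\norm{E_{11}+\tau-A_{11}/\mu}\le \norm{E_{11}}+\tau+(\tau+\sigma_{\min}(E_{11})/2)\le \tfrac32\norm{E_{11}}+2\tau$. For the upper bound we estimate $\sigma_{\min}(E_{11}+\tau-A_{11}/\mu)\ge \tfrac12\sigma_{\min}(E_{11})$, with the $\tau$ terms cancelling. We must treat $A_{12}A_{21}$ acting on $\ran E$ carefully, i.e.\ decide whether the $\sigma_{\min}$ should be of $A_{21}A_{12}$ on $\ker E$. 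The trick is to apply the argument to $y=A_{21}x_1\neq0$ after multiplying by $A_{21}(E_{11}+\tau-A_{11}/\mu)^{-1}$. Some care here is needed to obtain exactly the constants stated in \eqref{finalsquare2}.

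\textbf{Perturbation part.} We use $\delta<\sigma_{\min}(E_{11})$ and $\delta<\tau$. Since $\norm{\Delta_E}\le\delta<\tau\le\sigma_{\min}(E+\tau I)$, the matrix $E+\tau I+\Delta_E$ is invertible, which gives regularity. Its eigenvalues are those of
\[
(E+\tau I+\Delta_E)^{-1}(A+\Delta_A)=M_\tau+F .
\]
Using $\norm{(E+\tau I)^{-1}}\le 1/\tau$ and a Neumann series, we bound
\[
\norm{F}\le \frac{\delta(\tau+\norm{A})}{\tau(\tau-\delta)}
\]
(the terms coming from $\Delta_A$ and from $\Delta_E$ combine to exactly this). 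Since $M_\tau$ is diagonalizable for $\tau>0$ (we will check this, or otherwise interpret $\kappa_V=\infty$), the Bauer--Fike theorem shows that every eigenvalue of $M_\tau+F$ lies within $\kappa_V(M_\tau)\norm F=\gamma(\delta,\tau)$ of some eigenvalue of $M_\tau$.

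The remaining difficulty is the reverse direction: we need the $\mu$ chosen above to have a perturbed eigenvalue nearby. We handle this with a continuity argument along $s\Delta$, $s\in[0,1]$, using Bauer--Fike discs. Alternatively, we pick $\mu_\Delta$ in the disc around $\mu$, which is nonempty by counting eigenvalues in the union of connected components of the discs. The triangle inequality then gives \eqref{finalsquare2}.

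\textbf{Main obstacle.} We expect the main obstacle to be the unperturbed two-sided estimate with the exact constants, specifically controlling the $A_{11}/\mu$ term via $\tau_0$ and correctly handling the block $A_{12}A_{21}$ (rather than $A_{21}A_{12}$).
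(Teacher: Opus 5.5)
Your unperturbed argument is the paper's argument in slightly different form. The paper takes the Schur complement of $\eta A-(E+\tau I)$, $\eta=1/\mu$, and uses the Woodbury identity to conclude that $\mu^2$ is an eigenvalue of $A_{12}A_{21}\bigl(\tau(E_{11}+\tau I-\eta A_{11})\bigr)^{-1}$. That is exactly your identity $\mu^2\tau Bx_1=A_{12}A_{21}x_1$ with $B:=E_{11}+\tau I-A_{11}/\mu$; note $x_1\neq0$, since $x_1=0$ forces $x_2=0$. Taking norms in this identity, with your two estimates on $B$, already gives \eqref{finalsquare2} for $\delta=0$ with exactly the stated constants, where $\sigma_{\min}$ is that of the $\ran E$-block $A_{12}A_{21}$. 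The detour through $y=A_{21}x_1$ is unnecessary, and it would produce $\sigma_{\min}(A_{21}B^{-1}A_{12})$ rather than the stated quantity. Also drop the claim that regularity makes $A_{12}A_{21}$ invertible. It acts on $\ran E$ with rank at most $\dim\ker E$, so it is singular whenever $\dim\ker E<\dim\ran E$; the lower bound is then just vacuous, which is harmless. Your bound on $\norm F$ and your regularity argument coincide with the paper's.

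The genuine gap is the last step. The paper simply asserts that Bauer--Fike yields an eigenvalue $\mu_\Delta$ of $S=(E+\tau I+\Delta_E)^{-1}(A+\Delta_A)$ with $|\mu-\mu_\Delta|\le\gamma(\delta,\tau)$. You are right that Bauer--Fike with $\kappa_V(M_\tau)$ only gives the opposite inclusion, but neither of your repairs delivers radius $\gamma$. The homotopy $M_\tau+sF$ shows only that each connected component of $\bigcup_i D(\lambda_i,\gamma)$ contains as many eigenvalues of $M_\tau+F$ as of $M_\tau$. If the disc around $\mu$ overlaps other discs, that disc itself may contain no perturbed eigenvalue. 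For instance, $M=\diag(0,2)$ is normal, so $\kappa_V(M)=1$, and
\[
F=\begin{bmatrix}0.2&0.9\\-0.9&0\end{bmatrix},\qquad \norm F<1.01,\qquad M+F\ \text{has the double eigenvalue}\ 1.1,
\]
so the disc of radius $\norm F$ around the eigenvalue $0$ contains no eigenvalue of $M+F$. What your argument actually proves is a perturbed eigenvalue in the component containing $\mu$, i.e.\ $|\mu-\mu_\Delta|\le(2n-1)\gamma(\delta,\tau)$. That gives \eqref{finalsquare2} only with $\gamma$ replaced by $(2n-1)\gamma$. To keep $\gamma$ itself you would need one of two changes. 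Either apply Bauer--Fike with the roles swapped, which replaces $\kappa_V((E+\tau I)^{-1}A)$ by $\kappa_V(S)$ for diagonalizable $S$. Or add a separation hypothesis making $D(\mu,\gamma)$ an isolated component. The paper's proof needs the same repair at this point.
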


%---------------------------------------------
The formulation of Theorem~\ref{Theorem1} as stated above is optimal for general purposes. However, it can be refined with sharper estimates in certain special cases and extended in various ways to cover more general scenarios. In the following sections, we present several such adaptations tailored to our numerical experiments. For convenience, we summarize them here.

\begin{itemize}
    \item The case $\delta=0$ provides  bounds on the leading coefficient in the Puiseux expansion in the index 2 case.
    Meanwhile, the case $\delta > 0$ shows how the numerical results deviate from the square-root bounds when the pencil is $\delta$-close to an index-two pencil.
   
    \item Sharper estimates in the case $\Delta_E = 0$ and for dissipative Hamiltonian pencils are given in Corollary~\ref{cor:deltaE0} and Corollary~\ref{cor:pH}, respectively.

    \item Generalizations to a diagonalizable $E$ and arbitrary $E$ are given in Remark~\ref{cor:T} and Remark~\ref{rem:arbitraryE}, respectively.

    \item An extension to arbitrary finite eigenvalues of the pencil $\lambda E - A$ is discussed in Remark~\ref{rem:finite}.
\end{itemize}

A limitation of Theorem~\ref{Theorem1} is the presence of the eigenvector condition number $\kappa_V( (E+\tau I)^{-1} A )$ which may in principle be unbounded. We address this issue in two ways.
First, in Section~\ref{sec:pH}, we consider a special class of dissipative Hamiltonian pencils for which this condition number admits a sharper upper bound, see Corollary~\ref{cor:pH}. 
Second, we replace the deterministic perturbation $\tau I$ by probabilistic perturbations, allowing us to control the eigenvector condition number. This approach is discussed in more detail in the following subsection.

%------------------------------------------------
\subsection{Probabilistic results}

The second theoretical result of the paper is presented below.  By a \emph{complex Ginibre matrix} we understand an $n \times n$ random matrix $G_n = [g_{ij}]_{ij=1}^n$ with i.i.d. complex Gaussian entries, i.e. with real
and imaginary parts being independent $\mathcal N(0, 1/2n)$ random variables. Let $M$ and $\Delta$ be two arbitrary matrices.
Recall that for fixed $\tau>0$   both  matrices $M+\tau G_n$ and $M+\Delta+\tau G_n$ have $n$ distinct eigenvalues with probability one. We denote them, respectively, by $\lambda_i(\tau)$ and $\lambda_i^\Delta(\tau)$, $i=1,\dots n$, $\tau>0$. The ordering is not essential in what follows. The symbol $\mathbb E$ denotes the expected value (with respect to the Ginibre matrix). 

%------------------------------------------------
\begin{theorem}\label{thmM}
Let $M,\Delta\in\Comp^{n,n}$ be   matrices  with $\norm M,\norm{M+\Delta}\leq 1$. 
Then there exists a universal constant $\alpha(n)$, dependent only on the dimension $n$, such that 
\begin{equation}\label{probbound}
\left| \mathbb E\min_{i=1,\dots,n}|\lambda_i^{\Delta}(\tau)| - \mathbb E\min_{i=1,\dots,n}|\lambda_i(\tau)| \right|
\leq 
\frac{\alpha(n) \norm{\Delta}}{\tau} ,\quad \tau\in (0,1).
\end{equation}

Moreover, $\lim_{n\to\infty}\frac{\alpha(n)}{n^{3/2}} < 7.82843$.
\end{theorem}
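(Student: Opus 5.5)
The plan is to combine the Bauer--Fike theorem with the Banks--Garza-Vargas--Kulkarni--Srivastava bound \cite{banks2021gaussian} on the expected eigenvector condition number of a Ginibre-perturbed matrix. Set $X=M+\tau G_n$ and $X_\Delta=M+\Delta+\tau G_n$. Almost surely $X$ is diagonalizable. By Bauer--Fike, every eigenvalue of $X_\Delta = X+\Delta$ lies within $\kappa_V(X)\norm{\Delta}$ of some eigenvalue of $X$. Symmetrically, every eigenvalue of $X$ lies within $\kappa_V(X_\Delta)\norm{\Delta}$ of some eigenvalue of $X_\Delta$.

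The two pointwise inequalities then come from the following observations.

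\begin{enumerate}[(i)]
\item Let $\lambda^\Delta_j$ be the eigenvalue of $X_\Delta$ with minimal modulus. Choose $\lambda_i$ with $|\lambda_i-\lambda^\Delta_j|\le \kappa_V(X)\norm\Delta$. Then
\[
\min_i|\lambda_i| \le \min_j|\lambda_j^\Delta| + \kappa_V(X)\norm\Delta .
\]
\item Exchanging the roles of $X$ and $X_\Delta$ gives
\[
\min_j|\lambda_j^\Delta| \le \min_i|\lambda_i| + \kappa_V(X_\Delta)\norm\Delta .
\]
\end{enumerate}

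Bauer--Fike only needs diagonalizability of the unperturbed matrix, which holds almost surely. Taking expectations (both minima are integrable, being bounded by $\norm{X}$), we get
\[
\left|\mathbb E\min|\lambda_i^\Delta|-\mathbb E\min|\lambda_i|\right|\le \norm\Delta\,\max\{\mathbb E\kappa_V(X),\ \mathbb E\kappa_V(X_\Delta)\}.
\]

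It then remains to bound $\mathbb E\,\kappa_V(A+\tau G_n)$ uniformly over $\norm A\le1$, where $A=M$ or $A=M+\Delta$; this is exactly why the hypothesis is stated for both matrices. Here I would invoke the result of \cite{banks2021gaussian}. It states that for $\norm A\le 1$ and $\tau\in(0,1)$, $\mathbb E\,\kappa_V(A+\tau G_n)\le c\, n^{3/2}/\tau$ up to lower-order corrections, with an explicit constant. More precisely, it is derived from the pseudospectral-area estimate
\[
\mathbb E\sum_i \kappa(\lambda_i)^2 \le \frac{n^2}{\tau^2},
\]
combined with $\kappa_V\le \sqrt{n\sum_i\kappa(\lambda_i)^2}$ and Jensen's inequality. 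Defining $\alpha(n)$ as the resulting constant gives \eqref{probbound}.

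The main obstacle is the explicit limit $\lim\alpha(n)/n^{3/2}<7.82843$. I expect this number to arise as $4+2\sqrt2+1\approx 7.828$ or a similar combination, coming from tracking the constants in the Banks et al.\ argument. This requires care on several points: the Ginibre normalization $\mathcal N(0,1/2n)$, the relation between the area of the $\varepsilon$-pseudospectrum and $\sum_i\kappa(\lambda_i)^2$ as $\varepsilon\to0$, and the disc radius $\norm A+\tau\norm{G_n}$ containing the spectrum. The last point needs $\mathbb E\norm{G_n}\to 2$, or a tail bound for it, to absorb the $(1+\tau\norm{G_n})^2$ factor. I would redo their computation with explicit constants, bound $\tau<1$, and take $n\to\infty$ to extract the stated numerical limit.
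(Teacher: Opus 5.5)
Your Bauer--Fike step, the symmetric use of $\norm M,\norm{M+\Delta}\le 1$, and the reduction to $\mathbb E\,\kappa_V(A+\tau G_n)\le\alpha(n)/\tau$ via $\kappa_V\le\sqrt{n\sum_i\kappa(\lambda_i)^2}$ and Jensen are exactly the paper's argument. The gap is the step that carries all the content: a bound on $\mathbb E\sum_{i=1}^n\kappa(\lambda_i)^2$ over the \emph{whole} spectrum.

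The expectation bound on $\kappa_V$ you attribute to \cite{banks2021gaussian} is not available as a black box. What is available (Theorem~\ref{banks}) is $\mathbb E\sum_{|\lambda_i|\le r}\kappa(\lambda_i)^2\le n^2r^2/\tau^2$ for a \emph{deterministic} $r$. Your ``$\mathbb E\sum_i\kappa(\lambda_i)^2\le n^2/\tau^2$'' is only its case $r=1$, which counts just the eigenvalues in the unit disc. The spectrum lies in the random disc of radius $1+\tau\norm{G_n}$, and you cannot plug a random radius into that inequality and then average. So $\mathbb E\norm{G_n}\to 2$ does not help. A tail bound for $\norm{G_n}$ combined with the black-box estimate does not help either. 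On $\{\norm{G_n}\in(s,s+1]\}$ it only gives $\mathbb E\bigl[\sum_i\kappa(\lambda_i)^2\,\mathbf 1_{\{\norm{G_n}\in(s,s+1]\}}\bigr]\le n^2(s+2)^2/\tau^2$. There is no small factor $\mathbb P(\norm{G_n}>s)$, and these bounds are not summable in $s$. So ``redo their computation with explicit constants'' hides the missing ingredient, and $7.82843$ is guessed rather than derived.

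The paper splits into $\Xi_{-1}=\{\norm{G_n}\le 2\sqrt2\}$ and $\Xi_t=\{t+2\sqrt2<\norm{G_n}\le t+1+2\sqrt2\}$, $t\ge0$. On these events $|\lambda_i|\le t+2+2\sqrt2$. It then sums $\mathbb P(\Xi_t)\,n^2(t+2+2\sqrt2)^2/\tau^2$ using $\mathbb P(\Xi_t)\le 2e^{-nt^2}$ (Lemma~2.2 of \cite{banks2021gaussian}). The terms $t=-1,0$ give $(1+2\sqrt2)^2+2(2+2\sqrt2)^2=33+20\sqrt2=(5+2\sqrt2)^2$. The remaining terms contribute $O(n^{-1/2})$ to $\beta(n)/n^2$. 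With $\alpha(n)=\sqrt{n\beta(n)}$ this gives $\alpha(n)/n^{3/2}\to 5+2\sqrt2\approx 7.828427$, which is the actual origin of your number.

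Be aware that this route relies on the conditional bound $\mathbb E\bigl(\sum_i\kappa(\lambda_i)^2\mid\Xi_t\bigr)\le n^2(t+2+2\sqrt2)^2/\tau^2$. That bound does not follow formally from Theorem~\ref{banks}: conditioning on $\Xi_t$ can inflate the expectation by up to $1/\mathbb P(\Xi_t)$, which is exactly the obstruction described above. To close the argument rigorously you would need a joint estimate of $\mathbb P\bigl[\sigma_{\min}(z-A-\tau G_n)<\varepsilon,\ \norm{G_n}>s\bigr]$ of order $\varepsilon^2$ times a decaying tail in $s$. That requires reopening the least-singular-value argument behind Theorem~\ref{banks} and then integrating over $z\in\Comp$.
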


In the subsequent sections, we apply this result to $M=\frac{M_h(E,A)+I}{\norm  {M_h(E,A)+I }   }$, where 
\begin{align*}
   M_h(E,A) := \left[ E - h A \right]^{-1} \left[ E + h A  \right] 
\end{align*}
for such $h\in\mathbb R$ that matrix $ (E - h A )$ is invertible. 
Recall that if $\lambda_0$ is an eigenvalue of the pencil $\lambda E-A$ then $\mu_0=\frac{1+h\lambda_0}{1- h \lambda_0} $ is an eigenvalue of $M_h(E,A)$, and the structure of the Kronecker blocks of $\lambda E-A$ at $\lambda_0$ and the Jordan blocks of $M_h(E,A)$ at $\mu_0$ coincide, see \cite{mackey2015mobius}. In particular, if $\lambda E-A$ has an eigenvalue $\infty$ of index 2 then, for $\tau$ in a neighborhood of 0, the perturbed matrix $M+\tau G_n$ has eigenvalues with expansion
\begin{align}\label{eq:le_mu}
\lambda(\tau) =  C'\tau^{1/2} + o(\tau ^{1/2})    
\end{align}
for some constant $C'$, depending on the sample of the random matrix.  
Theorem~\ref{thmM} estimates the deviation of this bound when perturbing $M$ by a small norm matrix $\Delta$.

%------------------------------------------------
\subsection{Organization of the paper}

Two main theorems have already been formulated above (Theorem~\ref{Theorem1} and Theorem~\ref{thmM}). We continue with a detailed analysis of Theorem~\ref{Theorem1} in Section~\ref{sec:thm1}, therein one may find the full proof and corollaries mentioned above. 
In Section~\ref{sec:pH}, we discuss the dissipative Hamiltonian setting. Section~\ref{sec:thm2} is devoted to probabilistic reasoning constituting the proof of Theorem~\ref{thmM}. 
Finally, in Section~\ref{sec:examples}, we focus on practical aspects and compare both approaches through several simulations motivated by real-life examples.

%%%%%%%%%%%%%%%%%%%%%%%%%%%%%%%%%%%%%%%%%%%%%%
\section{Theorem~\ref{Theorem1}: auxiliary results, proof, and corollaries}\label{sec:thm1}
First, we establish an auxiliary result.

\begin{proposition}\label{prop:index2_form_RJ}
    Let $\lambda E-A$ be a pencil with $E\geq0$. 
    Then the following are equivalent:
    \begin{enumerate}[\rm (i)]
        \item  $\lambda E-A$ is regular of index at least two with no Kronecker blocks corresponding to infinity of size one;
        \item for some unitary matrix $U$ one has  $U^* (\lambda E-A)U$  of the form 
    \begin{equation}\label{EAblock}
\lambda \begin{bmatrix} E_{11}&0\\ 0&0 \end{bmatrix} -  \begin{bmatrix}
    A_{11}&A_{12}\\ A_{21}&0
\end{bmatrix},
\end{equation}
    with $A_{12}$ having full column rank, $A_{21}$ having full row rank, $E_{11}$ being diagonal and invertible, and the size of $E_{11}$ being strictly less than the size of $E$.
    \end{enumerate}
Furthermore, if $A=J-R$ with $J=-J^*$ and $R\geq 0$ then $U^*JU$ and $U^*RU$ can be partitioned conformably with \eqref{EAblock} as
\begin{equation}\label{JRblock}
U^* J U = \begin{bmatrix} J_{11}& J_{12}\\ -J_{12}^*& 0 \end{bmatrix},
\quad
U^* R U = \begin{bmatrix} R_{11}& 0\\ 0& 0\end{bmatrix}.
\end{equation}
\end{proposition}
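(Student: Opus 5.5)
The plan is to choose $U$ so that it diagonalizes $E$, read off the structure of the eigenvalue at infinity from the blocks of $A$, and then handle the dissipative Hamiltonian part separately.

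\textbf{Choice of $U$ and block form of $A$.} Since $E\geq 0$, there is a unitary $U$ with $U^*EU=\diag(E_{11},0)$, where $E_{11}$ is diagonal and positive definite. The first block corresponds to $\ran E$ and the second to $\ker E$. Partition $U^*AU=\begin{bmatrix}A_{11}&A_{12}\\A_{21}&A_{22}\end{bmatrix}$ conformably, matching the notation \eqref{projections}. The whole proof then reduces to translating the Kronecker structure at infinity into conditions on $A_{22}$, $A_{12}$ and $A_{21}$.

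\textbf{Jordan chains at infinity.} For a regular pencil, the number of Kronecker blocks at infinity equals $\dim\ker E=m$. A block of size at least two starts from a vector $x\in\ker E$ for which $Ex_1=Ax$ is solvable, that is, $Ax\in\ran E$. In block coordinates $x=[0;y]$, and this condition becomes $A_{22}y=0$. Hence the number of blocks of size $\geq 2$ is $\dim\ker A_{22}$, and the number of size-one blocks is $m-\dim\ker A_{22}=\operatorname{rank}A_{22}$. I would make this rigorous via a Wong-sequence or Kronecker-form argument.

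\textbf{Direction (i)$\Rightarrow$(ii).}
\begin{enumerate}[\rm (a)]
\item Index at least two forces $m>0$, so the size of $E_{11}$ is strictly less than $n$.
\item No size-one blocks at infinity means $\operatorname{rank}A_{22}=0$, i.e. $A_{22}=0$.
\item If $A_{12}y=0$ with $y\neq 0$, then $[0;y]$ lies in the kernel of $\lambda U^*EU-U^*AU$ for every $\lambda$. This contradicts regularity, so $A_{12}$ has full column rank.
\item The same argument applied to left kernels, $[0;z]^*$ with $z^*A_{21}=0$, gives that $A_{21}$ has full row rank.
\end{enumerate}

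\textbf{Direction (ii)$\Rightarrow$(i).} From $A_{22}=0$, every $x\in\ker E$ starts a chain of length at least two, so there are no size-one blocks at infinity and the index is at least two. The remaining claim is regularity, and this is the main obstacle. The full rank conditions on $A_{12}$ and $A_{21}$ alone do not obviously give it. The natural route is to take the Schur complement with respect to the invertible block $\lambda E_{11}-A_{11}$ for large $|\lambda|$; the leading term of $A_{21}(\lambda E_{11}-A_{11})^{-1}A_{12}$ is $\lambda^{-1}A_{21}E_{11}^{-1}A_{12}$, and this matrix need not be invertible. In fact, I expect the following small example to be a counterexample: $E_{11}=I_2$, $A_{11}=0$, $A_{12}=e_1$, $A_{21}=e_2^T$. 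The resulting $3\times 3$ pencil has identically vanishing determinant. So I would first try to prove regularity by showing $\det(\lambda E-A)\not\equiv0$ via that Schur complement. If this fails, I would add regularity as a hypothesis in (ii), or the invertibility of $A_{21}E_{11}^{-1}A_{12}$, which is exactly index two. This matches the standing assumption of Theorem~\ref{Theorem1}, where regularity is assumed anyway.

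\textbf{The dissipative Hamiltonian part.} Write $A=J-R$ in the same basis, so $A_{22}=J_{22}-R_{22}=0$. The Hermitian part of this block is $-R_{22}$ and its skew-Hermitian part is $J_{22}$, so both must vanish. Since $R\geq 0$ and $R_{22}=0$, positive semidefiniteness forces $R_{12}=0$ and $R_{21}=0$ (consider the $2\times 2$ principal minors). Finally, skew-Hermitianity of $J$ gives $J_{21}=-J_{12}^*$, which yields \eqref{JRblock}.
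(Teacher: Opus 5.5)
Your treatment of (i)$\Rightarrow$(ii) and of the $J,R$ splitting is essentially the paper's proof. You diagonalize $E$ unitarily, and you get $A_{22}=0$ because $x=[0;y]\in\ker E$ with $Ax\notin\ran E$ signals a size-one block at infinity. Full rank of $A_{12}$ and $A_{21}$ comes from regularity, and \eqref{JRblock} follows from $J_{22}-R_{22}=0$ and $R\geq 0$. Your count ``number of size-one blocks $=\operatorname{rank}A_{22}$'' is a precise version of the paper's one-line argument. To make it rigorous, note that in Kronecker coordinates the set $\{x\in\ker E:\ Ax\in\ran E\}$ corresponds to $\ker N\cap\ran N$. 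Its dimension is the number of nilpotent Jordan blocks of size at least two.

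For (ii)$\Rightarrow$(i) you have found an error in the statement, not a gap in your own proof. The paper's proof only asserts that a pencil satisfying (ii) is ``clearly regular'', and your example refutes this. Take $E=\diag(1,1,0)$ and $A$ with rows $(0,0,1)$, $(0,0,0)$, $(0,1,0)$. Every condition in (ii) holds, yet $(\lambda E-A)[1,0,\lambda]^\top=0$ for all $\lambda$, so the pencil is singular.

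So there is nothing left to ``first try''. Drop the hedge and state plainly that the converse is false as written. Of your two repairs, choose adding regularity to (ii). With that hypothesis your own argument closes the converse without the lemma from \cite{MehMW21}: there are $\dim\ker E>0$ blocks at infinity, none of size one since $\operatorname{rank}A_{22}=0$, so the index is at least two.

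The other repair, invertibility of $A_{21}E_{11}^{-1}A_{12}$, does characterize index exactly two within the form (ii). However, it breaks (i)$\Rightarrow$(ii), because (i) permits higher index. For example, $E=\diag(1,1,0)$ with $A$ having rows $(0,0,1)$, $(1,0,0)$, $(0,1,0)$ gives $\det(\lambda E-A)=-1$, a single block of size three at infinity, and $A_{21}E_{11}^{-1}A_{12}=0$.

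Only (i)$\Rightarrow$(ii) and the $J,R$ part are used later, in Theorem~\ref{Theorem1} and Corollary~\ref{cor:pH}. So the error does not propagate into the main results.
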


\begin{proof}
Assume (i) and let us take a diagonalisation $U$ of $E$, such  that
$$
U^{*} E U = \begin{bmatrix} E_{11}&0\\ 0&0 \end{bmatrix},\quad  U^{*} AU = \begin{bmatrix} A_{11}& A_{12}\\ A_{21} & A_{22} \end{bmatrix},
$$
with $E_{11}$  invertible. 
For simplicity we can assume from now on that $U=I$. As $\infty$ is an eigenvalue of $\lambda E- A$, the size of $E_{11}$ has to be strictly less than the size of $E$. Observe now that $A_{22}=0$. Indeed, suppose the converse and let $A_{22}x_2\neq0$ for some $x_2\neq 0$.  Consider the vector $x=[0,x_2]^\top$, partitioned conformably with \eqref{EAblock}. Observe that $x\in\ker E$, while there is no vector $y$ satisfying $Ey = Ax$. Hence, in the Kronecker form of $\lambda E-A$ there is a simple Kronecker block corresponding to infinity,  contradiction. 
Finally, since the pencil is regular, $A_{12}$ has full column rank and $A_{21}$ has full row rank, what finishes the proof of the forward implication.

To see the backward implication, notice that a pencil satisfying (ii) is clearly regular.  By Lemma 3 of \cite{MehMW21} it is of index at least 2. Note that there are no simple blocks corresponding to infinity as $A(\ker E)\subseteq \ran E$. 

To show \eqref{JRblock}, observe that $A_{22}=0$ implies $J_{22} = R_{22} = 0$. As $R$ is positive semidefinite, we have $R_{12}=0$ and $R_{21}=0$ as well. 

\end{proof}

%----------------------------------
We prove now the main result.
Observe that the eigenvalue condition number (see, e.g., \cite{stewart1990matrix})  is unsuitable for obtaining non-asymptotic bounds such as those provided in Theorem~\ref{Theorem1}.  The only available perturbation technique 
for tracking the fractional expansion under perturbation appears to be
the eigenvector condition number together with the Bauer-Fike theorem. Although the bound obtained from such reasoning may initially seem pessimistic, in specific examples it provides quite accurate estimates of the range of $\tau$ for which the theorem holds true, see Subsection~\ref{ex:deltas}.

\begin{proof}[Proof of Theorem~\ref{Theorem1}]
First assume that $\delta=0$.  By Proposition 
\ref{prop:index2_form_RJ}
we may assume that the pencil is of the form 
\eqref{EAblock}.
Fix $\tau\in(0,\tau_0)$, and let $\mu=\mu(\tau)$ be as in the definition of $\tau_0$, cf.   \eqref{eq:mu_bound}. We show that it satisfies \eqref{finalsquare2}. Observe that $\eta=\frac1\mu$ is an eigenvalue  of the reversal pencil $\lambda A-(E +\tau I)$, satisfying
\begin{equation}\label{tau0lambda}
\norm{A_{11}}|\eta|<\tau+\sigma_{\min}(E_{11})/2.
\end{equation}
This produces  
\begin{equation}\label{rednew}
\frac12\sigma_{\min}(E_{11} )\norm{x}    \leq \norm{( E_{11}+\tau I-\eta A_{11} )x}\leq \Big(  \frac32 \norm{E_{11}}+2\tau\Big)\norm x,
\quad x\in\Comp^{n_1},
\end{equation}
 where $n_1\times n_1$ is the dimension of $E_{11}$. In particular, the matrix
$E_{11}+\tau I-\eta A_{11}$ is invertible. Hence, the Schur complement of the block matrix $\eta A-(E+\tau I)$,
$$
\tau I - \eta^2 A_{21}(E_{11}+\tau I-\eta A_{11})^{-1}A_{12},
$$
has to be singular. This,  by the Woodbury matrix identity, implies the singularity of 
$$
\eta^2 A_{12}A_{21} - \tau(E_{11}+\tau I-\eta A_{11}).
$$
The latter, in turn, implies that $\mu^2$ is an eigenvalue of the regular linear pencil
$A_{12}A_{21}-\lambda (\tau(E_{11}+\tau I-\eta A_{11}))$, and hence, also of the 
matrix
$$
 A_{12}A_{21}(\tau(E_{11}+\tau I-\eta A_{11}))^{-1}.
$$
Thus,
$$
\frac{\sigma_{\min}(A_{12}A_{21})}{\sigma_{\min}{ (\tau(E_{11}+\tau I-\eta A_{11}))}} \leq  \mu^2 \leq \norm{A_{12}A_{21}} \norm{ (\tau(E_{11}+\tau I-\eta A_{11}))^{-1} },
$$
which, together with \eqref{rednew}, finishes the proof of the inequality \eqref{finalsquare2} in case $\gamma=0$.

Now take  arbitrary $\delta>0$ and let $\delta<\tau<\tau_0$ and $\mu(\tau)$ be as in \eqref{eq:mu_bound}.  Note that $\mu(\tau)$ is also an eigenvalue of the matrix $(E+\tau I)^{-1}A$, as $E+\tau I$ is positive (and hence, nonsingular).  Observe that  
$E+\tau I+\Delta_E$ is nonsingular as well, due to $\delta<\tau$. 
Further, we have
$$
S:=(E+\tau I+\Delta_E)^{-1}(A+\Delta_A)= (E+\tau I)^{-1}A +\Delta,
$$
with some matrix $\Delta$ satisfying a usual estimate
\begin{equation}\label{eq:delta-bounds}
    \norm{\Delta}\leq \delta\cdot  \frac{\tau+\norm A}{\tau(\tau-\delta)},\quad \text{for }\tau>\delta.
\end{equation}

By the Bauer-Fike theorem \cite{bauer1960norms} there exists an eigenvalue $\mu_\Delta (\tau)$ of $S$ 
satisfying
$$
\big| \mu(\tau)-\mu_\Delta(\tau) \big|\leq \gamma(\delta,\tau).
$$
As the eigenvalues of $S$ are also eigenvalues of the perturbed pencil $\lambda ( E +\tau I +\Delta_E)-(A+\Delta_A)$, the proof is finished.

\end{proof}

%---------------------------------------------
\begin{corollary}\label{cor:deltaE0}
Let $E, A, \Delta_E, \Delta_A, \tau_0$ be as in Theorem~\ref{Theorem1}, and assume in addition that $\Delta_E = 0$. 
Then inequality~\eqref{finalsquare2} holds with the sharper error bound on a larger set:
\begin{equation}
\gamma(\delta,\tau)
=
\delta \,\frac{\kappa_V\!\left((E+\tau I)^{-1}A\right)}{\tau},
\qquad 0 < \tau \le \tau_0.
\end{equation}
\end{corollary}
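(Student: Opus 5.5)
The proof repeats the proof of Theorem~\ref{Theorem1} verbatim up to the perturbation step, and changes only the estimate of the perturbation matrix. The $\delta=0$ part does not involve $\Delta_E,\Delta_A$ at all. For every $\tau\in(0,\tau_0)$, and by continuity also $\tau=\tau_0$ when $\tau_0<\infty$, it yields an eigenvalue $\mu(\tau)$ of $\lambda(E+\tau I)-A$ satisfying \eqref{finalsquare2} with $\gamma=0$. Since $E\geq 0$ and $\tau>0$, the matrix $E+\tau I$ is positive definite. Hence $\mu(\tau)$ is an eigenvalue of $(E+\tau I)^{-1}A$, and $\norm{(E+\tau I)^{-1}}\leq 1/\tau$, because the smallest eigenvalue of $E+\tau I$ is at least $\tau$.

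With $\Delta_E=0$ the perturbed pencil is $\lambda(E+\tau I)-(A+\Delta_A)$. Its leading coefficient is still $E+\tau I$, so it is invertible for every $\tau>0$, and no condition $\tau>\delta$ is needed. This is exactly why the admissible set enlarges to $0<\tau\leq\tau_0$. In particular, the perturbed pencil is regular, and its eigenvalues are the eigenvalues of
\[
S:=(E+\tau I)^{-1}(A+\Delta_A)=(E+\tau I)^{-1}A+\Delta,\qquad \Delta:=(E+\tau I)^{-1}\Delta_A .
\]
Here there is no resolvent expansion of $(E+\tau I+\Delta_E)^{-1}$, so the estimate \eqref{eq:delta-bounds} is replaced by the sharper
\[
\norm{\Delta}\leq \norm{(E+\tau I)^{-1}}\,\norm{\Delta_A}\leq \frac{\delta}{\tau}.
\]

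Now apply the Bauer--Fike theorem \cite{bauer1960norms} to the diagonalizable matrix $(E+\tau I)^{-1}A$ and the perturbation $\Delta$. This requires $\kappa_V((E+\tau I)^{-1}A)<\infty$; otherwise the bound is vacuous. Bauer--Fike is applied in the reverse direction: for the eigenvalue $\mu(\tau)$ of the unperturbed matrix we need an eigenvalue $\mu_\Delta(\tau)$ of $S$ nearby, not the other way round. I would handle this as in the proof of Theorem~\ref{Theorem1}, and justify it by the standard continuity argument. Consider the path $(E+\tau I)^{-1}A+t\Delta$, $t\in[0,1]$. The Bauer--Fike discs of radius $\kappa_V\norm{\Delta}$ around the unperturbed eigenvalues contain every eigenvalue along the path. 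The eigenvalues vary continuously in $t$, and each one starts at $t=0$ at an unperturbed eigenvalue, so the connected component of the union of discs containing $\mu(\tau)$ contains an eigenvalue of $S$. This gives
\[
|\mu(\tau)-\mu_\Delta(\tau)|\leq \kappa_V((E+\tau I)^{-1}A)\,\frac{\delta}{\tau}=\gamma(\delta,\tau)
\]
up to the same component subtlety the paper accepts in Theorem~\ref{Theorem1}.

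The triangle inequality $|\mu(\tau)|-\gamma\leq|\mu_\Delta(\tau)|\leq|\mu(\tau)|+\gamma$, combined with the $\delta=0$ bounds on $|\mu(\tau)|$, then gives \eqref{finalsquare2}. The only delicate point is this reverse Bauer--Fike step. Everything else is a simplification of the earlier proof: the improved bound on $\norm{\Delta}$ and the dropped restriction $\tau>\delta$ both come from the fact that $E$ is no longer perturbed.
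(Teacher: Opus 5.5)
Your proposal is the paper's proof. The only change from Theorem~\ref{Theorem1} is that with $\Delta_E=0$ one has $\Delta=(E+\tau I)^{-1}\Delta_A$, so $\norm{\Delta}\leq\delta/\tau$ for every $\tau>0$, and the restriction $\tau>\delta$ disappears. The reverse-direction Bauer--Fike issue you flag is a genuine gap: the component argument only places an eigenvalue of $S$ within $(2m-1)\gamma(\delta,\tau)$ of $\mu(\tau)$, where $m$ is the number of eigenvalues in that component (or one must use $\kappa_V(S)$ instead), but it is inherited verbatim from the paper's own proof of Theorem~\ref{Theorem1} rather than introduced by this corollary.
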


\begin{proof}
It suffices to observe that, in the case $\Delta_E = 0$, the estimate \eqref{eq:delta-bounds} in the proof of Theorem~\ref{Theorem1} improves to
$$
\|\Delta\| \le \frac{\delta}{\tau},\quad \tau>0.
$$
\end{proof}
%---------------------------------------------

\begin{remark}\label{cor:T}
If $E$ is diagonalizable with nonnegative eigenvalues, then the statement of Theorem~\ref{Theorem1} holds with $\norm{X}$, $\sigma_{\min}(X)$, $\kappa_V(X)$ $(X\in\Comp^{n,n})$, and orthogonal projections in formulas \eqref{projections}--\eqref{gamma} understood with respect to the inner product $\langle x,y\rangle_T:=y^*T^{-*}T^{-1}x$, $(x,y\in\Comp^n)$, where $T$ is a diagonalization of $E$.   
Indeed, observe that $E$ is selfadjoint and nonnnegative with respect to this new inner product and so Theorem~\ref{Theorem1} can be applied directly. 
It is informative to clarify the dependencies between the original matrix norm (minimal singular value, condition number, eigenvector condition number, respectively) and the corresponding quantities with respect to the inner product $\langle x,y\rangle_T$, which we denote with the subscript $T$. The following inequalities hold
 \begin{align*}
   &  \frac1{\kappa(T)}\norm{X}\leq \norm{X}_T=\norm{T^{-1}XT}\leq \norm{X}\kappa(T),\\
   & \frac1{\kappa(T)}  \sigma_{\min}(X)  \leq  \sigma_{\min}(X)_{T}=1/\norm{X^{-1}}_T\leq \sigma_{\min}(X)\kappa(T),\\
& \kappa(X)_T\leq\kappa(T)^2\kappa(X),\\
&\kappa_V(X)_T= \inf\{ \kappa(W)_T :\ WXW^{-1} \text{ is diagonal }\}\leq \kappa(T)^2\kappa_V(X).\\
 \end{align*}
%Taking the infimum over $T$, we obtain the stated estimates in terms of $\kappa_V(E)$.
\end{remark}

%---------------------------------------------
\begin{remark}\label{rem:arbitraryE} 
   Let $\lambda E -A\in\Comp^{n,n}[\lambda]$ be as in Theorem~\ref{Theorem1}, with $E$ arbitrary. Let  $U\Sigma V^*$ be a singular value decomposition of $E$.
   To obtain similar bounds for expansions of eigenvalues, we may consider perturbation $\tau UV^*$ instead of $\tau I$. Indeed, 
    multiplying the pencil $\lambda (E+\Delta_E +\tau UV^*)- (A +\Delta_A) $ from the left by $U^*$ and from the right by $V$ we obtain a (strictly equivalent) pencil
    $\lambda( \Sigma + U^*\Delta_EV +\tau I) - (U^*AV+ U^*\Delta_AV)$, to which we then can apply Theorem~\ref{Theorem1}. The details are left to the reader.
\end{remark}

%---------------------------------------------
\begin{remark}\label{rem:finite}
Let $\lambda_0$ be a finite eigenvalue of the pencil $\lambda E - A$, and assume that $A - \lambda_0 E \ge 0$.
To derive results analogous to Theorem~\ref{Theorem1}, we first shift $\lambda_0$ to zero by considering the pencil
$\lambda E - (A  - \lambda_0 E).$
We then apply Theorem~\ref{Theorem1} to the reversal pencil
$\lambda (A  - \lambda_0 E) - E.$
Let $\mu(\tau)$ denote the corresponding eigenvalues of $\lambda (A  - \lambda_0 E + \tau I) - E$.
It follows that the original perturbed pencil $\lambda E - (A + \tau I)$ has eigenvalues of the form
$
\lambda_0 + \frac{1}{\mu(\tau)}
$.
The corresponding estimates are then obtained by performing the same transformation on the inequalities in \eqref{finalsquare2}.
\end{remark}

%%%%%%%%%%%%%%%%%%%%%%%%%%%%%%%%%%%%%%%%%%%%%%%%%%%%%%%%%%%%
\section{Port-Hamiltonian pencils}\label{sec:pH}

As most our examples refer to this class of pencils, we devote the whole section to establish some properties crucial in their analysis. 
We are interested in  matrix pencils of the form
\begin{equation}\label{EJQR}
 \lambda E - (J - R)Q\in\Comp^{n,n}[\lambda],\quad J^* = -J,\  Q^*E  \geq 0,\ R \geq 0,
 \quad 
 \begin{array}{cc}  \text{(a)} & Q \text{  invertible},\\
    \text{(b)} & Q>0\\
   \text{(c)} &  Q=I_n. 
  \end{array}
\end{equation}
Below we discuss the algebraic properties and highlight subtle differences between these choices of $Q$.

%------------------------------------------------------
\begin{theorem} \label{th}
Assume that a pencil $\lambda E_0-A_0$ is regular. Then 
\begin{enumerate}[\rm (i)]
    \item
    $\lambda E_0-A_0$ is of the form \eqref{EJQR}{\rm (a)} 
    if and only if its eigenvalues are contained in the closed left half-plane, finite eigenvalues on the imaginary axis  have Kronecker blocks of size one and the infinity has Kronecker blocks of size at most two.

    \item $\lambda E_0-A_0$ is of the form \eqref{EJQR}{\rm (b)}  if and only if for some invertible $T$ we have
    $\lambda E_0-A_0=T^{-1}( \lambda E - (J-R) ) T$, where
 $\lambda E - (J-R)$ is of the form \eqref{EJQR}{\rm (c)}. 
In particular, in such case $E_0$ is diagonalizable with nonnegative eigenvalues.
\end{enumerate}
\end{theorem}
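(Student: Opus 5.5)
For part (i) I would rely on the Kronecker canonical form of the regular pencil $\lambda E_0-A_0$ together with the known characterization of dissipative Hamiltonian pencils from \cite{MehMW18,MehMW21}. For the direction ``form (a) $\Rightarrow$ spectral properties'', the literature gives the following for $\lambda E-(J-R)Q$ with $Q$ invertible and $Q^*E\ge0$. Eigenvalues lie in the closed left half-plane. Purely imaginary nonzero eigenvalues are semisimple, and infinity has blocks of size at most two. For the eigenvalue zero, invertibility of $Q$ forces semisimplicity as well. I would reprove the half-plane claim directly: from $(J-R)Qx=\lambda Ex$, multiply by $x^*Q^*$ to get $x^*Q^*(J-R)Qx=\lambda x^*Q^*Ex$. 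Taking real parts gives $-\,x^*Q^*RQx=\operatorname{Re}\lambda\, x^*Q^*Ex$. If $x^*Q^*Ex>0$, this yields $\operatorname{Re}\lambda\le0$. If instead $x^*Q^*Ex=0$, then $RQx=0$, and a short argument (using $Q^*E\ge0$, hence $Q^*Ex=0$, hence $Ex=0$) shows that $\lambda$ is not finite. For a Jordan chain at $\lambda=i\omega$, I would use the chain relation and the same quadratic-form trick to show the first chain vector $x_1$ satisfies $x_1^*Q^*Ex_1=0$, which contradicts $Ex_1\ne0$. The size $\le2$ bound at infinity follows from Proposition~\ref{prop:index2_form_RJ}-type block arguments applied to the reversal.

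For the converse in (i), I would construct the data block by block. By strict equivalence, $\lambda E_0-A_0=P(\lambda E_K-A_K)S$ with $\lambda E_K-A_K$ in Kronecker form, and it suffices to realize each block separately. Then I would take direct sums and absorb $P,S$ into new $E=PE_KS$ and $Q$. This absorption must preserve $Q^*E\ge0$, so I would write $A_0=P(J_KR_K\text{-part})Q_KS$ and set $Q=Q_KS$ and $E=PE_KS$. That forces the left factor to be $P$, so the right choice is $J-R=P(J_K-R_K)P^*$ and $Q=P^{-*}Q_KS$. Then $Q^*E=S^*Q_K^*E_KS\ge0$, and $J,R$ keep their structure.

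The per-block realizations are then concrete:
\begin{itemize}
\item a Jordan block at $\lambda_0$ with $\operatorname{Re}\lambda_0<0$ is Lyapunov-stable, so $A_K=(J_K-R_K)Q_K$ with $Q_K>0$ solving a Lyapunov equation and $E_K=I$;
\item a semisimple imaginary eigenvalue is realized by a scalar $J_K=i\omega$, $R_K=0$, $Q_K=1$;
\item an infinite block of size one is $\lambda\cdot0-1$, i.e.\ $R_K=1$, $Q_K=1$, $E_K=0$;
\item an infinite block of size two is $\lambda\begin{bmatrix}0&1\\0&0\end{bmatrix}-I$. I would realize it via the form \eqref{EAblock} with a skew $J=\begin{bmatrix}0&1\\-1&0\end{bmatrix}$ and choose $Q$ invertible with $Q^*E\ge0$, after a strict equivalence.
\end{itemize}
I expect this size-two block at infinity to be the main obstacle. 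The key is finding an invertible indefinite $Q$ with $Q^*E\ge0$; here I would first try $E=\operatorname{diag}(1,0)$, $J$ as above, $R=0$, $Q=I$, and check that this is strictly equivalent to the nilpotent block of size two.

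For (ii), suppose $Q>0$ and write $Q=T^*T$. Then $\lambda E_0-(J_0-R_0)T^*T=T^{-1}\big(\lambda TE_0T^{-1}-T(J_0-R_0)T^*\big)T$. Setting $E=TE_0T^{-1}$, $J=TJ_0T^*$, $R=TR_0T^*$ gives a pencil of form (c). Moreover $E=T^{-*}(Q E_0)T^{-1}=T^{-*}(Q^*E_0)T^{-1}\ge0$, since $Q^*E_0\ge0$. Conversely, if $\lambda E_0-A_0=T^{-1}(\lambda E-(J-R))T$ with form (c), set $J_0=T^{-1}JT^{-*}$, $R_0=T^{-1}RT^{-*}$ and $Q=T^*T>0$. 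Then $Q^*E_0=T^*ET\ge0$. Finally, $E_0=T^{-1}ET$ is similar to a positive semidefinite matrix, hence diagonalizable with nonnegative eigenvalues.
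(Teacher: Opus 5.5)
Your proposal is correct. Part (ii) is the paper's argument, except that the paper uses $Q^{1/2}$ where you allow any factorization $Q=T^*T$.

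Part (i) takes a different route. The paper obtains both directions from \cite[Thm.~3.1]{Mehl2022}. For a regular pencil with the listed spectral properties, that theorem already provides a strict equivalence $\lambda E_0-A_0=S(\lambda E-(J-R))T$ to a pencil of form (c). The paper then absorbs $S,T$ via $Q:=S^{-*}T$ and $J-R\mapsto S(J-R)S^*$, which is the same absorption as your $Q=P^{-*}Q_KS$, $J-R=P(J_K-R_K)P^*$.

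You instead build the canonical realization yourself from the Kronecker form: Lyapunov's theorem for the stable Jordan blocks, scalars $i\omega$ on the imaginary axis, and $\lambda\cdot 0+1$ for simple infinite blocks. Your trial for the size-two infinite block already succeeds. The pencil $\lambda\,\mathrm{diag}(1,0)-\begin{bmatrix}0&1\\-1&0\end{bmatrix}$ has determinant $1$ and $\operatorname{rank}\mathrm{diag}(1,0)=1$, so it is a single $2\times 2$ block at infinity, and the obstacle you anticipated does not arise.

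This makes the converse self-contained, needing only the Kronecker form and Lyapunov's theorem, at the cost of length. The paper's version is two lines but rests entirely on the external characterization.

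In the forward direction, your quadratic-form arguments for the closed left half-plane and for semisimplicity on $i\mathbb R$ are sound; this includes $0$, where invertibility of $Q$ is used. The one step that does not stand on its own is the bound on the size of the infinite blocks. Proposition~\ref{prop:index2_form_RJ} characterizes index at least two without simple infinite blocks and gives no upper bound on block size. Your literature citation covers this. Alternatively, argue directly with $H=Q^*E\ge 0$ and $\tilde A=Q^*(J-R)Q$:
\begin{itemize}
\item Take a chain $Hx_1=0$, $Hx_2=\tilde Ax_1$, $Hx_3=\tilde Ax_2$.
\item Then $x_1^*\tilde Ax_1=x_1^*Hx_2=0$, so $Q^*RQx_1=0$ and $\tilde A^*x_1=-\tilde Ax_1$.
\item Hence $0=x_1^*Hx_3=x_1^*\tilde Ax_2=-x_2^*Hx_2$.
\item Thus $Hx_2=0=\tilde Ax_1$, which contradicts regularity.
\end{itemize}
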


\begin{proof}
(i) The forward implication follows directly from \cite[Thm.3.1]{Mehl2022}. To see the reverse implication observe that, again by \cite[Thm.3.1]{Mehl2022}, a pencil  $\lambda E_0-A_0$ enjoying  the listed properties satisfies
$$
    \lambda E_0-A_0= S( \lambda  E- ( J-R) )T,\quad E,R\geq 0,\ J=-J^*,
$$
with some invertible $S,T$, and $\lambda E-(J-R)$ of the form \eqref{EJQR}(c). 
Setting $Q:=S^{-*}T$, we obtain
$$
\lambda E_0-A_0= \lambda SET + S(J-R)S^* Q ,\quad Q^* (SET) =T^*ET\geq0. 
$$

(ii) To see the forward implication set $T=Q_0^{-1/2}$, where $Q_0\geq0$ comes from the form \eqref{EJQR}(b)  of the pencil $\lambda E_0-A_0$. Then, for $A_0=J_0-R_0$, we have
$$
T^{-1}(\lambda E_0 - (J_0-R_0)Q_0)T= \lambda Q_0^{1/2} E_0 Q_0^{-1/2} - Q_0^{1/2}(J_0-R_0)Q_0^{1/2},
$$
which is clearly of the form \eqref{EJQR}(c).

The backward implication follows by setting $Q=T^*T$. 
\end{proof}

%------------------------------------------------------
\begin{remark}
\rm Statement (ii) reveals difficulties in characterizing pencils of the form \eqref{EJQR} with $Q>0$ in terms of the Kronecker form. Namely, it can be restated as saying that a given pencil is of the form \eqref{EJQR} with $Q>0$  if and only if it is similar to a pencil with the Kronecker form as in (i), with the similarity matrices $S,T$ satisfying $S=T^{-1}$. Characterizing pencil equivalence with respect to the relation $L(\lambda)=T^{-1}P(\lambda)T$ is not feasible. 
\end{remark}

%------------------------------------------------------
Next we restate Theorem~\ref{Theorem1}  in the port-Hamiltonian setting, obtaining some further simplification. Recall that $\tau_0$ was defined in \eqref{eq:mu_bound}, while $E_{11}$ in \eqref{projections}.

\begin{corollary}
\label{cor:pH}
 Let $\lambda E -(J-R)Q\in\Comp^{n,n}[\lambda]$ be a regular pencil of the form \eqref{EJQR} and of index two with no Kronecker blocks corresponding to infinity of size one. 
\begin{enumerate}[\rm (i)]
    \item 
If $Q=I$ then a perturbed pencil 
\begin{equation}\label{deltaEpH}
    \lambda (E + \tau I  + \Delta_E) -  (A +\Delta_A), 
    \quad \norm{\Delta_E}, \norm{\Delta_A}\leq \delta<\sigma_{\min}(E_{11}),
\end{equation}
 is regular and  has an eigenvalue $\mu_\Delta(\tau)$ satisfying
\begin{equation}\label{finalsquare2pH}
\frac1{\sqrt\tau}{\frac{\sigma_{\min} (J) }{{\sqrt{ \frac 32\norm{E}+2\tau} } }} - \gamma(\delta,\tau)
\leq
|\mu_\Delta(\tau)|
\leq
 \frac1{\sqrt\tau}{\frac{\norm{J} }{\frac 12 \sqrt{\sigma_{\min}( E_{11})}} } +\gamma(\delta,\tau), 
\end{equation}
where
\begin{equation}\label{gammapH}
 \gamma(\delta,\tau)= \delta\cdot \kappa_V( (E +\tau I)^{-1}A  )  \frac{\tau+\norm A}{\tau(\tau-\delta)},\quad 0\leq\delta<\tau<\tau_0.
\end{equation}

Furthermore, in the case $R=0$, the eigenvalue condition number admits the following estimate
\begin{equation}\label{kappaVkappa}
    \kappa_V( (E +\tau I)^{-1}J )\leq\kappa(E+\tau I)\leq \frac{\norm E+\tau}\tau.
\end{equation}

\item For arbitrary $Q>0$, the statements in {\rm (i)} hold true with $\norm{X}$, $\sigma_{\min}(X)$,  $\kappa(X)$, $\kappa_V(X)$ $(X\in\Comp^{n,n})$, and $\tau_0$ understood with respect to the inner product $\langle x, y\rangle_Q =\langle Qx, y\rangle$.

\end{enumerate}
\end{corollary}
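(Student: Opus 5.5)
Part (i) will be reduced to Theorem~\ref{Theorem1} with $A=J-R$ and $Q=I$. Since $Q=I$, condition \eqref{EJQR} gives $E\geq 0$, so the hypotheses of Theorem~\ref{Theorem1} hold and regularity together with the error term \eqref{gammapH} transfer verbatim. What remains is to rewrite the constants in \eqref{finalsquare2} in terms of $J$ and $E$.

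By Proposition~\ref{prop:index2_form_RJ} (the part concerning $A=J-R$), after a unitary change of basis we have $A_{12}=J_{12}$ and $A_{21}=-J_{12}^*$, because $R$ has no off-diagonal blocks. Hence $A_{12}A_{21}=-J_{12}J_{12}^*$, so
\[
\norm{A_{12}A_{21}}=\norm{J_{12}}^2\leq\norm{J}^2 .
\]
Taking square roots turns the upper bound of \eqref{finalsquare2} into the upper bound of \eqref{finalsquare2pH}, using $\sqrt{\tfrac12\sigma_{\min}(E_{11})}\geq \tfrac12\sqrt{\sigma_{\min}(E_{11})}$, which is a weakening and therefore allowed. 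For the lower bound we use $\norm{E_{11}}=\norm{E}$. We also need
\[
\sigma_{\min}(J_{12}J_{12}^*)\geq \sigma_{\min}(J)^2 ,
\]
and this is the delicate point. The matrix $J_{12}J_{12}^*$ is $n_1\times n_1$, and since $J_{12}$ has full column rank $n-n_1<n_1$ in general, $J_{12}J_{12}^*$ may well be singular. In that case $\sigma_{\min}(J_{12}J_{12}^*)=0$, the lower bound is trivial, and $\sigma_{\min}(J)$ must then vanish as well for consistency. I would argue via a vector $x$: for any unit $x$, $\norm{J_{12}^* x}$ is a block of $J^*[x;0]$, and I would try to compare it with $\sigma_{\min}(J)$. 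I expect this comparison to be the main obstacle, and it may only hold when $J_{12}J_{12}^*$ is nonsingular. If a direct proof fails, I would instead interpret $\sigma_{\min}(A_{12}A_{21})$ through the eigenvalues of the relevant Schur complement, i.e.\ revisit the step in the proof of Theorem~\ref{Theorem1} where $\mu^2$ is an eigenvalue of $A_{12}A_{21}(\tau(\cdots))^{-1}$.

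For \eqref{kappaVkappa} with $R=0$, set $P=E+\tau I>0$. Then
\[
P^{-1}J=P^{-1/2}\bigl(P^{-1/2}JP^{-1/2}\bigr)P^{1/2},
\]
and the middle factor is skew-Hermitian, hence unitarily diagonalizable. Therefore $T=P^{-1/2}U$ diagonalizes $P^{-1}J$, with $\kappa(T)=\kappa(P^{1/2})=\kappa(P)^{1/2}\leq\kappa(P)$ (the last step uses $\kappa(P)\geq1$). Finally,
\[
\kappa(P)=\frac{\norm{E}+\tau}{\lambda_{\min}(E)+\tau}\leq\frac{\norm{E}+\tau}{\tau}.
\]

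For part (ii), Theorem~\ref{th}(ii) gives $T=Q^{-1/2}$ with $T^{-1}(\lambda E-(J-R)Q)T$ of the form \eqref{EJQR}(c). I would apply part (i) to this transformed pencil and then translate back, noting that the Euclidean quantities for $T^{-1}XT$ coincide with the $\langle\cdot,\cdot\rangle_Q$-quantities for $X$, exactly as in Remark~\ref{cor:T}. The perturbation $\tau I$ is invariant under the similarity, while $\Delta_E,\Delta_A$ are measured in the $Q$-norm.
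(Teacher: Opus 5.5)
Your treatment of the upper bound, of \eqref{kappaVkappa} and of part (ii) is correct and follows the paper's route. That route is: reduction to Theorem~\ref{Theorem1} via \eqref{JRblock}; the skew-Hermitian structure of $(E+\tau I)^{-1}J$ in the $(E+\tau I)$-inner product, which your factorization through $P^{\pm1/2}$ makes explicit and even sharpens to $\kappa(E+\tau I)^{1/2}$; and similarity by $Q^{1/2}$ for (ii).

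The genuine gap is the lower bound in \eqref{finalsquare2pH}. You never establish $\sigma_{\min}(A_{12}A_{21})\ge\sigma_{\min}(J)^2$, and your suspicion is justified: it is false in general, and it is not true that $\sigma_{\min}(J)$ must then vanish. Take $R=0$, $E=\diag(1,1,1,0)$ and $J=\begin{bmatrix}0&1&0&0\\-1&0&0&0\\0&0&0&1\\0&0&-1&0\end{bmatrix}$. Then $\det(\lambda E-J)=\lambda^2+1$, there is exactly one block at infinity, of size two, $A_{22}=0$, and $\sigma_{\min}(J)=1$ since $J$ is orthogonal. But $J_{12}=e_3\in\Comp^3$, so $A_{12}A_{21}=-e_3e_3^*$ is singular and Theorem~\ref{Theorem1} yields only the trivial lower bound. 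In this example the claimed bound happens to hold, the large eigenvalues being $\pm i/\sqrt{\tau(1+\tau)}$, so the defect is in the derivation. The paper's own proof handles this step only by citing \eqref{JRblock}. That gives $A_{12}A_{21}=-J_{12}J_{12}^*$ and hence the upper bound, but says nothing about $\sigma_{\min}$. So you have located a point the paper passes over, but your proposal does not close it either.

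What does work is to test $J$ on $[0;y]$ rather than $J^*$ on $[x;0]$. Since $J[0;y]=[J_{12}y;0]$ has no $J_{11}$-contribution, $\norm{J_{12}y}\ge\sigma_{\min}(J)\norm{y}$, i.e.\ $\lambda_{\min}(J_{12}^*J_{12})\ge\sigma_{\min}(J)^2$. If $J_{12}$ is square, this number equals $\sigma_{\min}(J_{12}J_{12}^*)$ and your reduction goes through. Square $J_{12}$ is equivalent to $n_1=n-n_1$, i.e.\ $\lambda E-A$ having no finite eigenvalues, as in Subsection~\ref{ex:oseen}.

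In general you have to stop the proof of Theorem~\ref{Theorem1} before the Woodbury step and use the $(n-n_1)\times(n-n_1)$ Schur complement: $\tau\mu^2$ is an eigenvalue of $A_{21}B^{-1}A_{12}=-J_{12}^*B^{-1}J_{12}$ with $B=E_{11}+\tau I-\eta A_{11}$. When $B$ is Hermitian positive definite (e.g.\ $A_{11}=0$), this gives $\tau|\mu|^2\ge\sigma_{\min}(J)^2/\norm{B}$ and hence the claimed constant. For non-Hermitian $B$, the estimate $\operatorname{Re}\langle Bw,w\rangle\ge\frac12\sigma_{\min}(E_{11})\norm{w}^2$ coming from \eqref{tau0lambda} only gives $\tau|\mu|^2\ge\sigma_{\min}(E_{11})\sigma_{\min}(J)^2/(2\norm{B}^2)$, a weaker constant. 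So either restrict to square $J_{12}$, or supply such an argument with an adjusted constant.
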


\begin{proof}
(i) The first change to prove is that $A_{12}$ in \eqref{finalsquare2} is replaced by $J$ in \eqref{finalsquare2pH}.
This follows from the last part of Proposition~\ref{prop:index2_form_RJ}.

The second addition to Theorem~\ref{Theorem1} in this context  is inequality \eqref{kappaVkappa}. 
 To see this observe that
    the matrix $(E+\tau I)^{-1}J$ is skew-Hermitian with respect to the positive definite inner product $y^*(E+\tau I)x$, $x,y\in\Comp^n$. Hence, its eigenvalue condition number, with respect to the new inner product, equals one. A standard inequality between the induced norms, as in Remark~\ref{cor:T}, finishes the proof. 

(ii)
It is enough to see that $JQ$ is skew-Hermitian and $RQ$ and $E$ are positive semidefinite with respect to the new inner product. 
\end{proof}

\begin{remark}
Recall that the corresponding operator norm and adjoint of  $X\in \Comp^{n,n}$ are given by
\begin{equation*}
   \norm{X}_Q:= \norm{Q^{1/2} X Q^{-1/2}}, \qquad
   X^*_Q = Q^{-*} X^* Q^*.
\end{equation*}
    A dissipative Hamiltonian pencil arising from linearisation (see e.g. Subsection~\ref{ex:Ts} below) shows that it is preferable to use these norms rather than the condition-number-based estimates in Remark~\ref{cor:T}.
\end{remark}

%%%%%%%%%%%%%%%%%%%%%%%%%%%%%%%%%%%%%%%%%%%%%%%%%%%%%%%%%%%%%
\section{Proof of Theorem~\ref{thmM}}\label{sec:thm2}

In \cite{banks2021gaussian}, Banks,  Kulkarni, Mukherjee and Srivastava proved the following major result on estimating eigenvalue condition numbers. We present it here in a form adapted to our notation and purposes.
Recall that if $A$ has distinct eigenvalues $\lambda_1,...,\lambda_n$ and its spectral decomposition is given by $A = \sum^n_{i=1}\lambda_i v_i w_i^* = VDV^{-1}$ where $w_i^* v_i = 1$, then \emph{the eigenvalue condition number} of $\lambda_i$ is defined as 
\begin{equation*}
    \kappa(\lambda_i,A):=\norm{v_i w_i^*}.
\end{equation*}
\begin{theorem} \label{banks}
Let $M \in \mathbb{C}^{n \times n}$ be a matrix with $\|M\| \leq 1$, and let $\tau \in (0,1)$. Consider a complex Ginibre matrix $G_n$. With probability one, the perturbed matrix $M + \tau G_n$ has distinct eigenvalues, which we denote by $\lambda_1(\tau), \ldots, \lambda_n(\tau)$.
Then for every $r>0$ we have
$$
 \mathbb E\left[ \sum_{ |\lambda_i(\tau)|\leq r }\kappa(\lambda_i(\tau), M+\tau G_n)^2
 \right]\leq  \frac{n^2 r^2}{\tau^2}.
$$
\end{theorem}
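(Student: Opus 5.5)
The plan is to recover this as the disk case of the result of Banks, Kulkarni, Mukherjee and Srivastava \cite{banks2021gaussian}. Their general statement says that for any measurable set $B\subset\Comp$,
\[
\mathbb E\sum_{\lambda_i\in B}\kappa(\lambda_i,M+\tau G_n)^2\le \frac{n^2}{\pi\tau^2}\,\mathrm{vol}(B).
\]
Taking $B=\{|z|\le r\}$, so that $\mathrm{vol}(B)=\pi r^2$, gives exactly the claimed bound. First I will check that their normalization of $G_n$ is the same as ours: complex Gaussian entries with $\mathbb E|g_{ij}|^2=1/n$, i.e. real and imaginary parts $\mathcal N(0,1/2n)$. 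For completeness I also sketch how that inequality is proved.

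\textbf{Step 1: link between condition numbers and pseudospectra.} Write $X=M+\tau G_n$ and let $\Lambda_\varepsilon(X)=\{z:\sigma_{\min}(z-X)<\varepsilon\}$ be its $\varepsilon$-pseudospectrum. With probability one $X$ has distinct eigenvalues. For such $X$, first-order perturbation theory gives the following: as $\varepsilon\to0$, the component of $\Lambda_\varepsilon(X)$ around $\lambda_i$ is asymptotically a disk of radius $\varepsilon\,\kappa(\lambda_i,X)$. Hence
\[
\sum_{\lambda_i\in \mathrm{int}B}\kappa(\lambda_i,X)^2\le \liminf_{\varepsilon\to0}\frac{\mathrm{vol}(\Lambda_\varepsilon(X)\cap B)}{\pi\varepsilon^2}.
\]
For the disk, eigenvalues lying exactly on $|z|=r$ have probability zero, so the interior restriction costs nothing.

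\textbf{Step 2: averaging.} By Fatou's lemma and Fubini,
\[
\mathbb E\sum_{|\lambda_i|\le r}\kappa_i^2\le\liminf_{\varepsilon\to0}\frac1{\pi\varepsilon^2}\int_{|z|\le r}\mathbb P\big(\sigma_{\min}(z-M-\tau G_n)<\varepsilon\big)\,dz .
\]

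\textbf{Step 3: small-ball estimate.} It remains to show that for every fixed deterministic $Y$ (here $Y=z-M$),
\[
\mathbb P\big(\sigma_{\min}(Y+\tau G_n)<\varepsilon\big)\le \frac{n^2\varepsilon^2}{\tau^2}.
\]
This is a complex Sankar–Spielman–Teng type bound. I would prove it by writing $\sigma_{\min}^{-1}=\norm{(Y+\tau G_n)^{-1}}$ and bounding this norm by the rows of the inverse. For a fixed column, its distance to the span of the other columns is the modulus of a one-dimensional complex Gaussian with variance $\tau^2/n$, conditionally on the other columns. A complex Gaussian has a small-ball probability of order $\varepsilon^2$, not $\varepsilon$, because it lives in two real dimensions. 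A union bound over the $n$ columns then yields $n\cdot n\varepsilon^2/\tau^2$. Inserting this into Step 2 gives $\pi r^2 n^2\varepsilon^2/(\pi\varepsilon^2\tau^2)=n^2r^2/\tau^2$.

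\textbf{Main obstacle.} I expect the main difficulty to be Step 3. One has to get the sharp quadratic dependence on $\varepsilon$ with constant $n^2$, uniformly over the shift $Y$; the naive union-bound argument must be done carefully to avoid extra factors of $n$. The justification of the liminf inequality in Step 1, including the uniformity needed for Fatou, is a secondary technical point.
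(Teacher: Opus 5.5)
Your top-level argument matches the paper's. The paper gives no proof of this statement beyond quoting the bound $\mathbb E\sum_{\lambda_i\in B}\kappa(\lambda_i)^2\le \frac{n^2}{\pi\tau^2}\,\mathrm{vol}(B)$ from \cite{banks2021gaussian} and specializing to the disc of radius $r$. Passing from the open to the closed disc works as you argue, or by taking radius $r+\eta$ and letting $\eta\to0$. Steps 1 and 2 of your sketch also correctly reproduce the pseudospectral argument of that paper.

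Your self-contained sketch breaks in Step 3. Bounding $\norm{X^{-1}}$ by the rows of the inverse only gives $\sigma_{\min}(X)^{-1}=\norm{X^{-1}}\le\norm{X^{-1}}_F\le\sqrt n\,\max_i d_i^{-1}$, where $d_i$ is the distance of the $i$-th column to the span of the others. The inequality $\sigma_{\min}(X)\le\min_i d_i$ goes the wrong way for you. Hence you only have $\{\sigma_{\min}<\varepsilon\}\subseteq\bigcup_i\{d_i<\sqrt n\,\varepsilon\}$. A complex Gaussian of variance $\tau^2/n$ has density at most $n/(\pi\tau^2)$, so each of these events has probability at most $n^2\varepsilon^2/\tau^2$. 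The union bound therefore yields $n^3\varepsilon^2/\tau^2$, and hence $n^3r^2/\tau^2$ in the theorem; no amount of care in the union bound removes this extra factor $n$.

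Getting the right order needs a different idea: the Sankar--Spielman--Teng device of testing $X^{-1}$ on a single random unit vector $v$ independent of $G_n$. By unitary invariance, $\norm{X^{-1}v}$ is the reciprocal of a single row-to-hyperplane distance. Also $|\langle u,v\rangle|^2\sim\mathrm{Beta}(1,n-1)$ for the top right singular vector $u$ of $X^{-1}$. Together these give $\mathbb P(\sigma_{\min}<\varepsilon)\le (1-\tfrac1n)^{-(n-1)}\,n^2\varepsilon^2/\tau^2<e\,n^2\varepsilon^2/\tau^2$, which is still off by a constant.

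The exact constant $n^2$ in the statement is obtained in \cite{banks2021gaussian} by a comparison argument; the paper's $\alpha(n)$ and its $n^{3/2}$ asymptotics rely on it. A variant of \'Sniady's theorem shows that $\sigma_{\min}(Y+\tau G_n)$ stochastically dominates $\sigma_{\min}(\tau G_n)$ for complex Gaussian perturbations. Edelman's exact law $\mathbb P(\sigma_{\min}(G_n)<t)=1-e^{-n^2t^2}\le n^2t^2$ then finishes the bound. So as a proof by citation your proposal is fine, but the sketch as written proves only a weaker inequality.
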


%-----------------------------------------------
We are ready to prove the second result of the paper.

\begin{proof}[Proof of Theorem~\ref{thmM}]

Fix $\tau>0$ for the whole proof. 
Observe that Theorem \ref{banks} bounds the expected value of a certain part of the sum $\sum_{i=1}^n \kappa(\lambda_i(\tau), M+\tau G_n)^2 $. As the first step of the proof we 
derive  an estimate of the expectation of the whole sum.
For this consider the events 
\begin{align*}
&\Xi_t:=\{t+2\sqrt 2 < \norm{G_n}\le t+1+2\sqrt 2 \},\quad t=0,1,\dots, \\
&\Xi_{-1}:=\{\norm{G_n}\le 2\sqrt 2 \}.
\end{align*}
As $\tau,\norm M\leq 1$, we have
$$
|\lambda_i(\tau)|\leq \norm{M+\tau G_n}\leq  \norm{G_n} +1,\quad i=1,\dots,n. 
$$
Hence,  for $r=t+2+2\sqrt 2$ we have
$$
  \sum_{ i=1}^n\kappa(\lambda_i(\tau), M+\tau G_n)^2 = \sum_{ |\lambda_i(\tau)|\leq r }\kappa(\lambda_i(\tau), M+\tau G_n)^2
 \quad\text{on the event } \Xi_t.
$$
This gives the following bound for the conditional expectation
$$
  \mathbb E\left( \sum_{ i=1}^n\kappa(\lambda_i(\tau), M+\tau G_n)^2\ \Big|\ \Xi_t\right)
 \leq  \frac{n^2 (t+2+2\sqrt 2)^2}{\tau^2}, \quad t=-1,0,1,\dots
$$
By Lemma 2.2 of \cite{banks2021gaussian} we have
$\mathbb P(\Xi_t)\leq 2e^{-nt^2}$, $t=0,1,\dots$ We estimate trivially  $\mathbb P(\Xi_{-1})$ by $1$. Altogether, we obtain

\begin{eqnarray*}
\mathbb E \sum_{ i=1 }^n \kappa(\lambda_i(\tau), M+\tau G_n)^2
&\leq&    \sum_{t=-1}^\infty   \mathbb P(\Xi_t)\frac{n^2(t+2+2\sqrt 2)^2}{\tau^2}    \\
&\leq& \frac{ {(9+4\sqrt{2})} n^2  }{\tau^2} 
+ \frac{2n^2}{\tau^2} \sum_{t=0}^\infty e^{-nt^2} ({t^2 + (4\sqrt{2} + 4) t + 8\sqrt{2} + 12 })   \\
&\leq& \frac{n^2}{\tau^2} 
\left(9+4\sqrt{2} + 24 + 16\sqrt{2} + 2\int\limits_{0}^{\infty} e^{-nt^2} (t^2 + (4\sqrt{2} + 4) t + 8\sqrt{2} + 12) dt\right)\\
&\leq& \frac{n^2}{\tau^2} 
\left(33+20\sqrt{2} + \frac{\sqrt{\pi}}{2n^{3/2}} 
 + \frac{4\sqrt{2} + 4}{n} + \frac{\sqrt{\pi} (8\sqrt{2} + 12) }{n^{1/2}} \right)\\
& =: & \frac{\beta(n)}{\tau^2}  .\label{kV}
\end{eqnarray*}

Recall that the condition number of individual eigenvalues bounds the eigenvalue condition number $\kappa_V$ (see, e.g., Lemma 3.1 of \cite{banks2021gaussian}), hence
\begin{eqnarray}
 \mathbb E(\kappa_V(M+\tau G_n))
 &\leq& \mathbb E \sqrt{ n   \sum_{i=1}^n\kappa(\lambda_i(\tau),M+\tau G_n)^2}\\ 
&\leq &
 \sqrt{n\ \mathbb E \sum_{i=1}^n\kappa(\lambda_i(\tau),M+\tau G_n )^2}\\
&\leq & \frac{\sqrt{n\beta(n)}}{\tau} =: \frac{\alpha(n)}{\tau}.\label{alphaE} 
\end{eqnarray}
Note that, by its definition, $\alpha(n)$ has the following large $n$ asymptotics

$$ \lim_{n\to\infty}\frac{\alpha(n)}{n^{3/2}}\leq \sqrt{ 33+20\sqrt 2 } < 7.82843.
$$

We prove now \eqref{probbound}. Recall that $\tau>0$ is fixed and 
both matrices $M+\tau G_n$ and $M+\Delta+\tau G_n$ are almost surely diagonalizable. Let $\hat G_n$ be a sample of the Ginibre ensemble for which this happens.
Let $\mu_0$ be the eigenvalue of $M+\Delta+\tau \hat G_n$ of minimal absolute value.
By the 
Bauer-Fike theorem \cite{bauer1960norms}, 
there exists an eigenvalue $\lambda_0$ of 
$M+\tau \hat G_n$ with
$|\lambda_0-\mu_0|\leq \norm{\Delta}   \kappa_V(M+\tau \hat G_n)$. Hence,
$$
|\lambda_0|-  \norm{\Delta}   \kappa_V(M+\tau \hat G_n) <   |\mu_0|.
$$
Returning to random variables, we have  almost surely that  
$$
\min_{i=1,\dots,n} |\lambda_i(\tau)| -    \norm{\Delta}   \kappa_V(M+\tau G_n) \leq \min_{i=1,\dots n} |\lambda_{i}^\Delta(\tau)|. 
$$
Employing \eqref{alphaE} we receive
$$
\mathbb E\min_{i=1,\dots,n} |\lambda_i(\tau)| -   \mathbb E\min_{i=1,\dots n} |\lambda_{i}^\Delta(\tau)| \leq \frac{\norm\Delta\alpha(n)}\tau.
$$
By symmetry   of the roles of $M$ and 
$M+\Delta$, the formula \eqref{probbound} is proved. 

\end{proof}

%%%%%%%%%%%%%%%%%%%%%%%%%%%%%%%%%%%%%%%%%%%%%%%%%%%%
\section{Practical implementations }\label{sec:examples}

In this section, we demonstrate the two main theorems of the paper applied to a  collection of benchmark examples commonly studied in the literature. 
The results are collected in Figure~\ref{f:all} for a direct side-by-side comparison. The rows correspond to the individual examples described below, while the columns represent the two proposed methods.

Code and data for reproducing the numerical experiments, including implementations of both methods, are available online at \url{https://github.com/hblazhko/index_detection}. The code also supports custom configurations for additional experiments.

%--------------------------------------------------------
\begin{remark}\label{rem:leftplot} First method of index detection. The plots in the left column of Figure~\ref{f:all} refer to Theorem~\ref{Theorem1}. They show the 
reciprocal $1/|\lambda(\tau)|$ vs. $\tau$ in blue. The parameter $\tau$ ranges from $10^{-20}$ to $10^2$ in all examples.
The reciprocal is computed as the smallest absolute value of the eigenvalues of the reversal pencil, due to numerical reasons. 
The slope of the line should be $1/2$ for index 2 (with no simple eigenvalues infinity) and $1$ for index 1. 

Furthermore, the plots show the reciprocals of the bounds \eqref{finalsquare2}, displayed as orange dash-dot lines (in some cases a different estimate is used, see below for details). Note that for some values of $\tau$ the lower bound in \eqref{finalsquare2} might be negative,  hence trivial. In such cases there is no  upper bound on the left plot. Further, the parameters $\delta$ and $\tau_0$ are marked  with vertical lines (dashed purple and dotted red, respectively). 
\end{remark}

%--------------------------------------------------------
\begin{remark}\label{rem:rightplot} Second method of index detection. 
The plots in the right column  of Figure~\ref{f:all} refer to Theorem~\ref{thmM}. Solid blue lines show the  smallest absolute value of the eigenvalues of 
$$
\frac{(E- hA)^{-1}(E+hA) +I}{\norm{(E- hA)^{-1}(E+hA) +I}} + \tau G_n,
$$
averaged over 10 samples of the complex Ginibre ensemble $G_n$, cf.  Theorem~\ref{thmM}.
The slope of the line should be $1/2$ for index 2 (with no simple eigenvalues infinity) and $1$ for index 1.

The plots additionally show the upper and lower bounds provided by Theorem~\ref{thmM} with $\norm{\Delta} \leq e^{-15}$, indicated by green dash-dot lines.  Observe a visible turning point in the log-log plot, after which the three curves begin to coincide. In particular, to the left of this point the calculated slope of the convergence may be inaccurate due to possible numerical errors. To the right of the turning point, the eigenvalues of the randomized matrix are computed accurately enough to estimate their slope.
\end{remark}

\begin{figure}
    \centering
    \includegraphics[width=\textwidth]{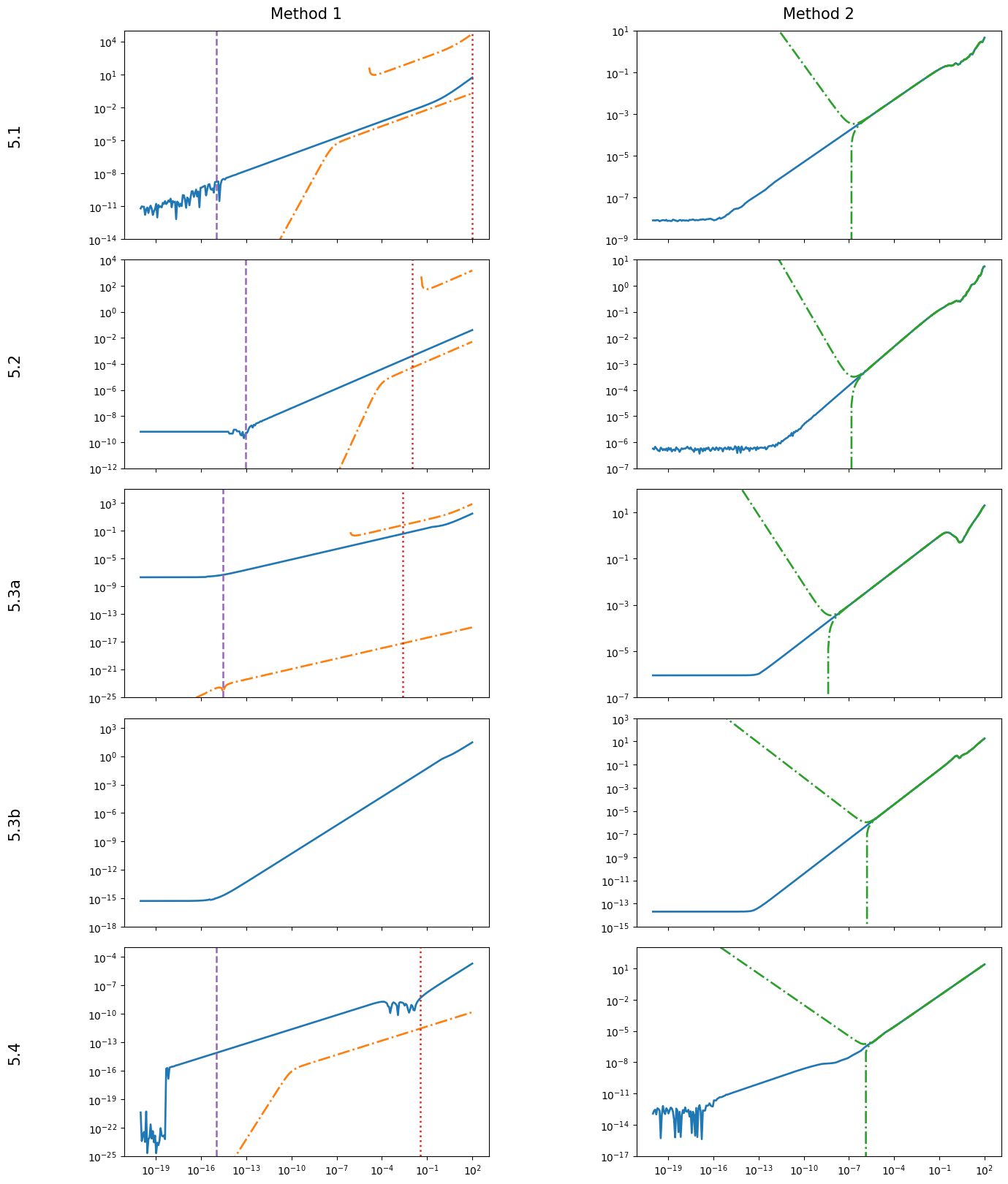}

    \caption{The plots show the reciprocals of eigenvalues corresponding to Theorem~\ref{Theorem1} (left column, solid blue line) and the eigenvalues corresponding to Theorem~\ref{thmM} (right column, solid blue line). The corresponding bounds are shown with dash-dot orange lines. See Remarks~\ref{rem:leftplot} and~\ref{rem:rightplot} for a detailed description. The rows correspond to the subsections, as indicated.
    \label{f:all}}
\end{figure}

%--------------------------------------------------------
\subsection{Toy model}\label{ex:oseen}
   Consider pencil of the form $\lambda E-(J-R)$ with 
\begin{eqnarray}\label{eq:ex1}
E &=  \begin{bmatrix}
I_n & 0   \\
0 & 0
\end{bmatrix}  \in\Comp^{2n,2n},\quad 
A &=  
\begin{bmatrix}
\frac{ R_0  R_0^{*}}{\norm{ R_0}^2}     & -J_0^*   \\
J_0 & 0
\end{bmatrix}.
\end{eqnarray}
Note that for an invertible $J_0$ we have a regular pencil of index 2. 
In our experiments, we sampled $J_0$ and $R_0$ as real $100 \times 100$ matrices with i.i.d. normal entries. 
In particular, both matrices were invertible almost surely.

The numerical results are presented in Figure~\ref{f:all}. 
Both plots clearly exhibit regions of $\tau$ with convergence rate $1/2$, indicating the presence of a Kronecker block of size two. 

The bounds shown for Method~1 are derived from Theorem~\ref{Theorem1} with $\delta = e^{-15}$. 
In this case, the parameter $\tau_0$ from \eqref{eq:mu_bound} exceeds $100$, implying that the bounds are valid throughout the entire displayed range of $\tau$. 
As one can observe, the lower bound closely follows the computed smallest eigenvalue over a certain interval and exhibits the same slope $1/2$. 
The upper bound initially displays similar behavior; however, it loses accuracy earlier and begins to deviate once the lower bound in Theorem~\ref{Theorem1} becomes negative. 

In this model both methods give accurate plots for recognizing the index, which is confirmed by theoretical bounds from Theorem~\ref{Theorem1} and ~\ref{thmM}, respectively.
%-----------------------------------------------------  
\subsection{The influence of a  congruence  transformation} \label{ex:oseen2}

The purpose of this example is to illustrate how the accuracy of the bounds provided in Theorem~\ref{Theorem1} degrades without the knowledge of the explicit block structure \eqref{EAblock}. Let us consider the same pencil \eqref{eq:ex1} from Subsection~\ref{ex:oseen} but transform it with a congruence matrix $S$ sampled as a real $100 \times 100$ matrix with i.i.d. standard normal entries:

\begin{equation}\label{eq:ex1_2}
  S (\lambda E-(J-R))S^*.
\end{equation}

Note that the numerically computed eigenvalues  $s_1\geq\dots\geq s_{200}$ of (positive semidefinite) $SES^*$ are of magnitude $s_{100}\simeq 10^{1}$, while $s_{101} \simeq10^{-13}$.
Therefore, transforming it into the structure \eqref{EAblock}  would require a rank decision.
However, both plots of the eigenvalues shown in Figure~\ref{f:all} do not differ qualitatively from the plot corresponding to the previous case $S=I$ and the index 2 structure is well recognized without the rank decision process. 
The latter is needed only for plotting the bounds from Theorem~\ref{Theorem1}. 
If we interpret
$$
E=U\diag(s_1,\dots,s_{100},0,\dots 0  )U^*,\quad \Delta_E=U\diag(0,\dots 0, s_{101},\dots,s_{200} )U^*,
$$
where $U$ is the eigenvector matrix of $E$,  we have  $\delta=s_{101}\simeq10^{-13}$ and $\tau_0 \approx 0.0106$.

In this model both methods give accurate plots for recognizing the index, which is confirmed by theoretical bounds from Theorem~\ref{Theorem1} and ~\ref{thmM}, respectively.

%----------------------------------------------
\subsection{A dissipative Hamiltonian linearisation of a system of vibrating strings  }\label{ex:Ts}

Consider the quadratic polynomial $
\lambda^2 M + \lambda C + K 
$ arising from the mass-spring-damper system described in \cite[Section 3.9]{TisM01}.
The coefficients have the form
\begin{align*}
&M = \text{diag}(m_1,...,m_n), \\
&C = P\ \text{diag}(d_1, \dots,d_{n-1}, 0)\ P^\top +\diag(t_1,\dots,t_n),\\
&K = P\ \text{diag}(k_1, \dots,k_{n-1}, 0)\ P^\top +\diag(\kappa_1,\dots,\kappa_n),\ \text{with}\\
&P  = [\delta_{ij} - \delta_{i,j+1}]_{i,j=1}^n.
\end{align*}
The corresponding port-Hamiltonian linearization is of the form 
$$
E=\begin{bmatrix} M & 0 \\0&I \end{bmatrix}, \quad  R=\begin{bmatrix} C & 0 \\0&0 \end{bmatrix},\quad 
J=\begin{bmatrix} 0 & -I_n \\I_n&0 \end{bmatrix} \quad 
Q    =\begin{bmatrix} I_n & 0 \\0& K \end{bmatrix}.
$$

%----------------------------------------------
\noindent \textit{Case a)} The theory developed in~\cite{MehMW21} allows us to identify a set of parameters for which the above polynomial lies in a neighborhood of index~2 polynomials, while remaining relatively far from singular polynomials. 
Specifically, we can set $m_1 = d_1 = t_1 = \varepsilon \ll 1$, while keeping the remaining coefficients separated from zero. By  \cite{MehMW21} we obtain in this way a pencil close to index 2, however, in a safe distance from the set of singular pencils. 

Figure~\ref{f:all}, row 5.3a, presents both methods applied to the case of $n=10$ strings with 
$\varepsilon = e^{-15}$. The bounds shown for Method~1 are obtained from Corollary~\ref{cor:pH}(ii) for port-Hamiltonian pencils with $Q>0$, where 
$\delta$ is taken equal to $3\varepsilon =3 e^{-15}$. In this case, $\tau_0$ from \eqref{eq:mu_bound} equals approximately $0.002$.

%----------------------------------------------
\noindent\textit{Case b)} 
On the other hand, setting $m_1 = \varepsilon \ll 1$ while keeping $d_1$ and $t_1$ bounded away from zero leads to pencil close to index 1,  in a safe distance from the set of singular pencils and pencils of index 2. 

Figure~\ref{f:all}, row 5.3b, shows both methods applied to the case of $n=10$ strings with $\varepsilon = e^{-15}$. 
Since the resulting pencil is of index~1, the bounds provided for Method~1 are not applicable in this setting. 
Nevertheless, both plots clearly exhibit a convergence rate equal to~1.

In this model both methods give accurate plots for recognizing the index, confirmed by Theorems~\ref{Theorem1}, and Theorem~\ref{thmM}, respectively. 
Thus, a clear distinction between pencils near to index 1 and near to index 2 is visible, even though the perturbed pencil (for which the eigenvalues were computed) has no eigenvalue at infinity.

%----------------------------------------------
\subsection{Electrical circuit}\label{ex:CMOS} 

Consider the two-stage CMOS operational amplifier introduced in \cite{sommer2012application} and later studied from the numerical point of view in \cite{GERNANDT2025429}. The associated matrix pencil has an infinite eigenvalue of algebraic multiplicity 6 and is of index 2. 

The numerical problem arising in this example is that the nonzero entries of $E$ and $A$ are of magnitude $10^{-14}$ and $10^{-4}$, respectively, which is too small to obtain the desired convergence for $\tau$ ranging from $10^{-20}$ to $10^2$. To keep $\tau$ in the same range in all examples we multiply both matrices by $e^{10}$. 
Both plots presented in Figure~\ref{f:all} show convergence rate $1/2$ for a certain range of $\tau$.
Furthermore, $\tau_0$ marked on the plot corresponds in this situation to the place where the smallest eigenvalue (of the reversal pencil) arising from the origin, interferes with other eigenvalues, as visible on the plot. 

In this example both Methods produce a clear recognition of index 2. Further, Method 1 is confirmed by theoretical bounds. 
Moreover, in this example the theoretical bounds from Theorem~\ref{thmM} lose their accuracy  due to the different magnitudes of $A$ and $E$.

%----------------------------------------------
\subsection{Errors in the staircase form}\label{ex:deltas}

In practice, the matrix $E$ is typically diagonal, and hence its kernel is clearly identified. 
The difficulty in determining whether a system is of index~2 lies in verifying whether the matrix $A_{22}$ in the block form~\eqref{EAblock} is a zero matrix. 
To highlight this, we examine how perturbations in the matrix $A$ affect the behavior of the eigenvalues and the bounds in Theorem~\ref{Theorem1}. 
To this end, we consider the pencil~\eqref{eq:ex1} from Subsection~\ref{ex:oseen}, perturbed by a small matrix $\Delta_A$.

Figure~\ref{f:deltas} shows Method~1 applied to the pencil perturbed by a matrix $\Delta_A$ sampled as a 
$200 \times 200$ matrix with  i.i.d.\ real Gaussian entries of variance $e^{-7}$, $e^{-5}$, and $e^{-3}$ (from left to right, respectively). 
The displayed bounds are obtained from Corollary~\ref{cor:deltaE0} for the case $\Delta_E = 0$, with $\delta = e^{-7}, e^{-5}, e^{-3}$, respectively. 

Note that the estimates in Corollary~\ref{cor:deltaE0} hold for $\tau>0$,  not only $\tau>\delta$, which was previously the case. Hence, the value of $\delta$ is not marked on the plot. 
In all cases $\tau_0$ exceeds 100 so the obtained bounds are meaningful on the whole considered range of $\tau$, thus it is not marked as well.

Although the pencil~\eqref{eq:ex1} is of index~2, larger perturbation magnitudes lead to wider regions in which the convergence rate equals~1, corresponding to pencils of index~1. 
In particular, this correspondence allows one to estimate the distance to the set of index~2 pencils via the observed convergence behavior. 
We also note that, in this example, the provided bounds are nearly sharp for $\delta=e^{-3}$.

One can observe that the range of values of $\tau$ can be divided into four distinct regions. 

\begin{enumerate}[\rm (I)]
\item Too large values of $\tau$ (relative to the scale of $E, A$), where the statement is not yet valid and its effect cannot be observed;
\item Optimal values, where the convergence rate given in the Puiseux expansion \eqref{eq:expansion} 
can be observed;
\item Suboptimal values of $\tau$, i.e., values for which the numerics change a Kronecker block of size 2 into two blocks of size 1;
\item Too small values of $\tau$, for which round-off error effects hinder any computation.
\end{enumerate}

%---------------------------------------------
\begin{figure}
    \centering
    \includegraphics[width=\textwidth]{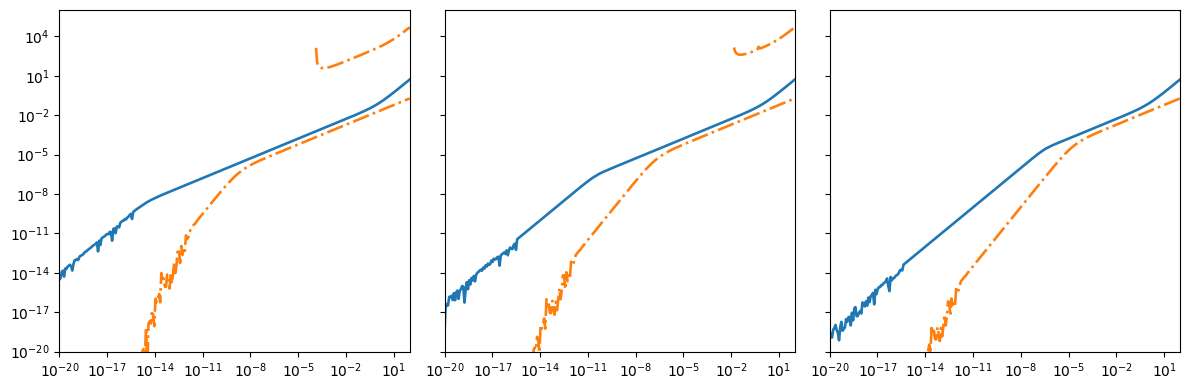}

    \caption{ The plots show the reciprocals of eigenvalues corresponding to Theorem~\ref{Theorem1} with solid blue lines. Corresponding bounds are shown with dash-dot orange lines. See Remark~\ref{rem:leftplot} for a full description. The plots correspond to the variance $e^{-7}$, $e^{-5}$, $e^{-3}$  of the entries of $\Delta_A$ from Subsection~\ref{ex:deltas}, respectively.}\label{f:deltas}
\end{figure}

%%%%%%%%%%%%%%%%%%%%%%%%%%%%%%%%%%%%%%%%%%%%%%%%%%%%%%%%%%%%%%%%%
\section{Conclusions}
We have answered questions (Q1) and (Q2) positively. 
We have demonstrated, both theoretically and numerically, that the second order Kronecker blocks are visible and detectable in the expansion of eigenvalues. Furthermore,  
the Cayley transform preserves this feature despite the numerical error. 
We have provided heuristic evidence adressing question (Q3).

\section{Acknowledgments}

Hanna Blazhko gratefully acknowledges the support of the program Excellence Initiative – Research University at the Jagiellonian University in Kraków.

\bibliographystyle{plain}
\bibliography{bibi}

@article{akinola2014calculation,
  title={The calculation of the distance to a nearby defective matrix},
  author={Akinola, Richard O and Freitag, Melina A and Spence, Alastair},
  journal={Numerical Linear Algebra with Applications},
  volume={21},
  number={3},
  pages={403--414},
  year={2014},
  publisher={Wiley Online Library}
}

@article{erbay2024index,
  title={Index concepts for linear differential-algebraic equations in infinite dimensions},
  author={Erbay, Mehmet and Jacob, Birgit and Morris, Kirsten and Reis, Timo and Tischendorf, Caren},
  journal={DAE Panel},
  volume={2},
  year={2024}
}

@article{VanDooren1979,
  author  = {Van Dooren, Paul},
  title   = {The computation of {Kronecker}'s canonical form of a singular pencil},
  journal = {Linear Algebra and its Applications},
  volume  = {27},
  year    = {1979},
  pages   = {103--140},
  doi     = {10.1016/0024-3795(79)90035-1}
}

@incollection{gillis2022solving,
  title={Solving Matrix Nearness Problems via {H}amiltonian Systems, Matrix Factorization, and Optimization},
  author={Gillis, Nicolas and Sharma, Punit},
  booktitle={Recent Stability Issues for Linear Dynamical Systems: Cetraro, Italy 2021},
  pages={1--83},
  year={2024},
  publisher={Springer}
}

@article{gillis2018finding,
  title={Finding the nearest positive-real system},
  author={Gillis, Nicolas and Sharma, Punit},
  journal={SIAM Journal on Numerical Analysis},
  volume={56},
  number={2},
  pages={1022--1047},
  year={2018},
  publisher={SIAM}
}

@article{alam2005construction,
  title={On the construction of nearest defective matrices to a normal matrix},
  author={Alam, Rafikul},
  journal={Linear algebra and its applications},
  volume={395},
  pages={367--370},
  year={2005},
  publisher={Elsevier}
}

@incollection{kressner2015distance,
  title={Distance problems for linear dynamical systems},
  author={Kressner, Daniel and Voigt, Matthias},
  booktitle={Numerical algebra, matrix theory, differential-algebraic equations and control theory: {F}estschrift in honor of {V}olker {M}ehrmann},
  pages={559--583},
  year={2015},
  publisher={Springer}
}

@inproceedings{giesbrecht2017computing,
  title={Computing the nearest rank-deficient matrix polynomial},
  author={Giesbrecht, Mark and Haraldson, Joseph and Labahn, George},
  booktitle={Proceedings of the 2017 ACM International Symposium on Symbolic and Algebraic Computation},
  pages={181--188},
  year={2017}
}

@article{faulwasser2022optimal,
  title={Optimal control of port-{H}amiltonian descriptor systems with minimal energy supply},
  author={Faulwasser, Timm and Maschke, Bernhard and Philipp, Friedrich and Schaller, Manuel and Worthmann, Karl},
  journal={SIAM Journal on Control and Optimization},
  volume={60},
  number={4},
  pages={2132--2158},
  year={2022},
  publisher={SIAM}
}

@article{kotyczka2021symplectic,
  title={Symplectic discrete-time energy-based control for nonlinear mechanical systems},
  author={Kotyczka, Paul and Thoma, Tobias},
  journal={Automatica},
  volume={133},
  pages={109842},
  year={2021},
  publisher={Elsevier}
}

@article{mehrmann2023control,
  title={Control of port-Hamiltonian differential-algebraic systems and applications},
  author={Mehrmann, Volker and Unger, Benjamin},
  journal={Acta Numerica},
  volume={32},
  pages={395--515},
  year={2023},
  publisher={Cambridge University Press}
}

@article{bauer1960norms,
  title={Norms and exclusion theorems},
  author={Bauer, Friedrich L and Fike, Charles T},
  journal={Numerische mathematik},
  volume={2},
  number={1},
  pages={137--141},
  year={1960},
  publisher={Springer}
}

@article{saak2018model,
  title={Model reduction of constrained mechanical systems in {MM. ESS}},
  author={Saak, Jens and Voigt, Matthias},
  journal={IFAC-PapersOnLine},
  volume={51},
  number={2},
  pages={661--666},
  year={2018},
  publisher={Elsevier}
}

@article{emmrich2013operator,
  title={Operator differential-algebraic equations arising in fluid dynamics},
  author={Emmrich, Etienne and Mehrmann, Volker},
  journal={Computational Methods in Applied Mathematics},
  volume={13},
  number={4},
  pages={443--470},
  year={2013},
  publisher={Walter de Gruyter GmbH}
}

@article{alam2011characterization,
  title={Characterization and construction of the nearest defective matrix via coalescence of pseudospectral components},
  author={Alam, Rafikul and Bora, Shreemayee and Byers, Ralph and Overton, Michael L},
  journal={Linear Algebra and its Applications},
  volume={435},
  number={3},
  pages={494--513},
  year={2011},
  publisher={Elsevier}
}

@article{alam2005sensitivity,
  title={On sensitivity of eigenvalues and eigendecompositions of matrices},
  author={Alam, R and Bora, S},
  journal={Linear Algebra and its applications},
  volume={396},
  pages={273--301},
  year={2005},
  publisher={Elsevier}
}

@article{sun1991perturbation,
  title={Perturbation expansions for invariant subspaces},
  author={Sun, Ji-guang},
  journal={Linear Algebra and its Applications},
  volume={153},
  pages={85--97},
  year={1991},
  publisher={Elsevier}
}

@article{kressner2023singular,
  title={Singular quadratic eigenvalue problems: Linearization and weak condition numbers},
  author={Kressner, Daniel and {\v{S}}ain Glibi{\'c}, Ivana},
  journal={BIT Numerical mathematics},
  volume={63},
  number={1},
  pages={18},
  year={2023},
  publisher={Springer}
}

@article{stewart1972sensitivity,
  title={On the sensitivity of the eigenvalue problem ${A}x=\lambda {B}x$},
  author={Stewart, Gilbert W},
  journal={SIAM Journal on Numerical Analysis},
  volume={9},
  number={4},
  pages={669--686},
  year={1972},
  publisher={SIAM}
}

@article{ahmad2010pseudospectra,
  title={On pseudospectra, critical points, and multiple eigenvalues of matrix pencils},
  author={Ahmad, Sk Safique and Alam, Rafikul and Byers, Ralph},
  journal={SIAM Journal on Matrix Analysis and Applications},
  volume={31},
  number={4},
  pages={1915--1933},
  year={2010},
  publisher={SIAM}
}

@book{stewart1990matrix,
  title={Matrix perturbation theory},
  author={Stewart, Gilbert W and Sun, Ji-guang},
address = {new York},
  year={1990},
publisher = {Academic Press},
}

@article{freund2004extension,
  title={An extension of the positive real lemma to descriptor systems},
  author={Freund, Roland W and Jarre, Florian},
  journal={Optimization methods and software},
  volume={19},
  number={1},
  pages={69--87},
  year={2004},
  publisher={Taylor \& Francis}
}

@article{gillis2018computing,
  title={Computing the nearest stable matrix pairs},
  author={Gillis, Nicolas and Mehrmann, Volker and Sharma, Punit},
  journal={Numerical Linear Algebra with Applications},
  volume={25},
  number={5},
  pages={e2153},
  year={2018},
  publisher={Wiley Online Library}
}

@article{Kressner_2024,
   title={Analysis of eigenvalue condition numbers for a class of randomized numerical methods for singular matrix pencils},
   volume={64},
   ISSN={1572-9125},
   url={http://dx.doi.org/10.1007/s10543-024-01033-w},
   DOI={10.1007/s10543-024-01033-w},
   number={3},
   journal={BIT Numerical Mathematics},
   publisher={Springer Science and Business Media LLC},
   author={Kressner, Daniel and Plestenjak, Bor},
   year={2024},
   }

@article{Hochstenbach_2023,
author = {Hochstenbach, Michiel E. and Mehl, Christian and Plestenjak, Bor},
title = {Solving Singular Generalized Eigenvalue Problems. {P}art {I}{I}: Projection and Augmentation},
journal = {SIAM Journal on Matrix Analysis and Applications},
volume = {44},
number = {4},
pages = {1589-1618},
year = {2023},
doi = {10.1137/22M1513174},

URL = {  https://doi.org/10.1137/22M1513174},
eprint = {  https://doi.org/10.1137/22M1513174}
}

@article{Hochstenbach_2019,
   title={Solving Singular Generalized Eigenvalue Problems by a Rank-Completing Perturbation},
    author={Hochstenbach, Michiel E and Mehl, Christian and Plestenjak, Bor},
  journal={SIAM Journal on Matrix Analysis and Applications},
  volume={40},
  number={3},
  pages={1022--1046},
  year={2019},
  publisher={SIAM}
}

@article{TisM01,
  author    = {F. Tisseur and K. Meerbergen},
  title     = {The Quadratic Eigenvalue Problem},
  journal   = {{SIAM} Review},
  volume    = {43},
  pages     = {235--286},
  year      = {2001}
}

@incollection{sommer2012application,
  title={Application of symbolic circuit analysis for failure detection and optimization of industrial integrated circuits},
  author={Sommer, Ralf and Krau{\ss}e, Dominik and Sch{\"a}fer, Eric and Hennig, Eckhard},
  booktitle={Design of Analog Circuits through Symbolic Analysis},
  pages={445--477},
  year={2012},
  publisher={Bentham Science Publishers}
}

@article{Lidski66,
  author    = {V.~B.~Lidskii},
  title     = {Perturbation theory of non-conjugate operators},
  journal   = {Zh. Vychisl. Mat. Mat. Fiz.},
  year      = {1966},
  volume    = {6},
  number    = {1},
  pages     = {52--60},
  note      = {Translated in \emph{U.S.S.R. Comput. Math. Math. Phys.}, \textbf{6}(1):73--85, 1966},
  url       = {http://mi.mathnet.ru/zvmmf7501},
  mrnumber  = {0196930},
  zbl       = {0166.40501},
  doi       = {10.1016/0041-5553(66)90033-4}
}

@article{Wilkinson1979,
  author  = {J. H. Wilkinson},
  title   = {Kronecker's Canonical Form and the {QZ} Algorithm},
  journal = {Linear Algebra and its Applications},
  volume  = {28},
  pages   = {285--303},
  year    = {1979}
}

@article{MehMW18,
  title={Linear algebra properties of dissipative {H}amiltonian descriptor systems},
  author={Mehl, C. and Mehrmann, V. and Wojtylak, M.},
  journal={{S}{I}{A}{M} Journal on Matrix Analysis and Applications},
  volume={39},
  number={3},
  pages={1489--1519},
  year={2018},
  publisher={SIAM}
}

@article{MehMW21,
	author = {{Mehl}, C. and {Mehrmann}, V. and {Wojtylak}, M. },
	title = {Distance problems for dissipative {H}amiltonian systems and related matrix polynomials},
	journal = {Linear algebra and its applications},
	year = {2021},
	volume = {623},
	pages = {335--366},
}

@article{de2008first,
  title={First order spectral perturbation theory of square singular matrix pencils},
  author={De Ter{\'a}n, Fernando and Dopico, Froil{\'a}n M and Moro, Julio},
  journal={Linear algebra and its applications},
  volume={429},
  number={2-3},
  pages={548--576},
  year={2008},
  publisher={Elsevier}
}

@article{mackey2015mobius,
  title={M{\"o}bius transformations of matrix polynomials},
  author={Mackey, D Steven and Mackey, Niloufer and Mehl, Christian and Mehrmann, Volker},
  journal={Linear Algebra and its Applications},
  volume={470},
  pages={120--184},
  year={2015},
  publisher={Elsevier}
}

@article{Mehl2022,
  author    = {Christian Mehl and Volker Mehrmann and Michał Wojtylak},
  title     = {Matrix Pencils with Coefficients that have Positive Semidefinite {H}ermitian Parts},
  journal   = {SIAM Journal on Matrix Analysis and Applications},
  volume    = {43},
  year      = {2022},
  pages     = {1186--1212}
}

@article{banks2021gaussian,
  title={Gaussian regularization of the pseudospectrum and Davies’ conjecture},
  author={Banks, Jess and Kulkarni, Archit and Mukherjee, Satyaki and Srivastava, Nikhil},
  journal={Communications on Pure and Applied Mathematics},
  volume={74},
  number={10},
  pages={2114--2131},
  year={2021},
  publisher={Wiley Online Library}
}

@article{GERNANDT2025429,
title = {Eigenvalues of parametric rank-one perturbations of matrix pencils},
journal = {Linear Algebra and its Applications},
volume = {708},
pages = {429-457},
year = {2025},
issn = {0024-3795},
doi = {https://doi.org/10.1016/j.laa.2024.12.012},
url = {https://www.sciencedirect.com/science/article/pii/S0024379524004841},
author = {Hannes Gernandt and Carsten Trunk}
}

\end{document}